\documentclass[11pt,reqno]{amsart}
\usepackage{latexsym,amsmath,amssymb,mathrsfs,xcolor,amsthm}
\usepackage{enumerate}
\usepackage{graphicx}
\usepackage{units}
\usepackage{ esint }
\usepackage{comment}
\usepackage{url}

plus 6pt minus 12pt
\newtheorem{definition}{Definition}[section]

\newtheorem{lemma}{Lemma}[section]
\newtheorem{remark}{Remark}[section]
\newtheorem{conjecture}{Conjecture}[section]
\newtheorem{corollary}{Corollary}[section]

\numberwithin{equation}{section}

\renewcommand{\d}{\,{\rm d}}

\newtheorem{theorem}{Theorem}[section]
\title[Mean Value Estimates]{Mean Value Estimates for a Real-Exponent Analogue of Waring's Problem}
\author{Ataleshvara Bhargava}
\subjclass[2020]{11D75,11P05,11P55}

\begin{document}

\begin{abstract} For non-integer $\theta > 3$ and $\kappa \geq 1$, we show that the smallest $r_0$ such that the mean value estimate 
\[ \int_{-\kappa}^{\kappa} \Big| \sum_{X < x \leq 2X} e(\alpha x^{\theta}) \Big|^{2r} \d\alpha \ll_{\epsilon} \kappa X^{2r - \theta+\epsilon}  \]
holds for all integers $r \geq r_0$ satisfies $2r_0 \leq \theta^2(1+O(\theta^{-1/2}))$. This is an improvement over the previous bound by Poulias \cite{Poulias1} of $2r_0 \leq (\lfloor 2\theta \rfloor + 1)(\lfloor 2\theta \rfloor + 2)$. As a consequence, the bound on the asymptotic order of the minimum number of variables required to prove the expected asymptotic formula for the number $R_{s,\theta}(N)$ of solutions $(x_1,\ldots,x_s) \in \mathbb{N}^s$ to the Diophantine equation 
\[ \lfloor x_1^{\theta} \rfloor+\cdots+\lfloor x_s^{\theta} \rfloor = N \]
is improved by a factor of $4$. We also discuss a certain Diophantine system which arises naturally from our proof, which may have applications to other counting problems and may be of independent interest. 
\end{abstract}

\maketitle 

\everymath{\displaystyle} 

\section{Introduction}
First considered by Deshouillers \cite{Des} in 1973 and Arkhipov and Zhitkov \cite{ArkhZhit} in 1984, there has recently been renewed interest in analogues of Waring's problem with non-integral exponent; see for example \cite{Bhargava2, Kuf, mad26, Poulias1, Poulthesis, taf26}. An important tool in such problems is a mean value estimate of the form 
\begin{align} \label{eq:meanval}
\int_{-\kappa}^{\kappa} \Big| \sum_{X < x \leq 2X} e(\alpha x^{\theta}) \Big|^{2r} \d\alpha \ll \kappa X^{2r - \theta+\epsilon} 
\end{align}
for large $X > 0$, for a fixed non-integer $\theta > 1$, where $\kappa \geq 1$, and $r$ is an integer sufficiently large in terms of $\theta$. Estimates of this type were first implicitly proven by Deshouillers \cite{Des} under the assumption $\theta > 12$ and $2r > c_1\theta^3(\log(\theta)+14)$ and improved by Arkhipov and Zhitkov \cite{ArkhZhit} to $\theta > 12$ and $2r > c_2\theta^2\log(\theta) $ for positive constants $c_1$ and $c_2$. Using the estimates available with the now proven Main Conjecture in Vinogradov's Mean Value Theorem (see \cite{BDG} or \cite{WooNEC}, see also \cite{WooleyCube} for the first proof of the cubic case), Poulias \cite{Poulias1} improved these bounds to $\theta > 2$ and $2r \geq (\lfloor 2\theta \rfloor + 1)(\lfloor 2\theta \rfloor + 2)$. Interestingly, the analogous estimates when $\theta$ is replaced by an integer $k$ hold when $2r \geq k^2+k$, and so the bounds for this non-integer variant differ from what may be true by at least a factor of $4$. One of our main objectives here is to reduce this discrepancy for large $\theta$ so that, up to lower order terms, the number of variables required for both is the same. 

\begin{theorem} \label{thm:main}
Fix a non-integer $\theta > 3$. Then \eqref{eq:meanval} holds for $2r \geq \theta^2+9\theta^{3/2}$. 
\end{theorem}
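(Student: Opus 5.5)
By orthogonality, the left side of \eqref{eq:meanval} is bounded by $\kappa$ times the number of solutions $(\mathbf{x},\mathbf{y})\in(X,2X]^{2r}$ of
\[ \Big|\sum_{i\le r} x_i^\theta-\sum_{i\le r} y_i^\theta\Big|<\kappa^{-1}, \]
up to a smoothing factor. So I will prove that this count is $\ll X^{2r-\theta+\epsilon}$. Poulias's approach Taylor expands $x^\theta$ about a point of a short interval, which approximates it by a polynomial of degree about $2\theta$, and then applies Vinogradov's mean value theorem at degree $\lfloor 2\theta\rfloor$. That costs the factor $4$. My plan is to use a polynomial of degree only $k=\lceil\theta\rceil+O(\theta^{1/2})$, together with the extra arithmetic information contained in the non-integrality of $\theta$.

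\textbf{Localisation.} Split $(X,2X]$ into intervals of length $H=X^{1-\delta}$. By Hölder's inequality, at the cost of $X^{\epsilon}$ and a power of $X/H$, reduce to the case where all variables lie in a single interval $I=(h,h+H]$. Write $x=h+u$ with $0<u\le H$. Then
\[ x^\theta=\sum_{j\ge0}\binom{\theta}{j}h^{\theta-j}u^j. \]
The terms with $j>k$ have size at most $X^{\theta}(H/X)^{k+1}$. I will choose $\delta\asymp\theta^{-1/2}$, which makes the tail negligible once $k\ge\theta+c\theta^{1/2}$. The condition $|\sum x_i^\theta-\sum y_i^\theta|<\kappa^{-1}$ then implies that the moment differences $s_j=\sum u_i^j-\sum v_i^j$ for $1\le j\le k$ satisfy one real linear inequality,
\[ \Big|\sum_{j=1}^k \binom{\theta}{j}h^{\theta-j}s_j\Big|\ll 1, \qquad |s_j|\le 2rH^j. \]

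\textbf{Counting.} The key point is that $\theta\notin\mathbb{Z}$. The coefficients $\binom{\theta}{j}h^{\theta-j}$ are then nonzero for every $j$, and they have comparable scales $h^{\theta-j}$. As $h$ ranges over the dyadic interval and the ratios $h^{-1}$ vary, the linear form is "generic". This is the structure behind the Diophantine system mentioned in the abstract. I will show that for all but a small set of $h$, the inequality forces $s_j=0$ for all $j$ in a range $j_0<j\le k$ with $j_0\approx\theta-O(\theta^{1/2})$. The small moments $s_1,\dots,s_{j_0}$ are left free, and each free $s_j$ costs at most a factor $H^{j}$ in the count. Counting solutions of the system $s_j=0$ for $j_0<j\le k$ is then done by Vinogradov's mean value theorem in its Main Conjecture form (\cite{BDG,WooNEC}), applied at degree $k$. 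After dropping the constraints for $j\le j_0$, this gives at most
\[ H^{2r-k(k+1)/2}\cdot\prod_{j\le j_0}H^{j}\cdot X^{\epsilon}, \]
provided $2r\ge k(k+1)$. Since $k\approx\theta+O(\theta^{1/2})$, this condition becomes $2r\ge\theta^2+O(\theta^{3/2})$. Tracking constants, with $\delta$ optimised, should give the stated threshold $9\theta^{3/2}$.

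\textbf{Main obstacle.} The hardest step is to show that the single real inequality nearly forces the high-degree moments to vanish, uniformly over $h$. My first attempt will be to sum over $h$ as an additional variable: the family of forms $\binom{\theta}{j}h^{\theta-j}$ as $h$ varies can be handled by a derivative or large-sieve argument, since differentiating in $h$ lowers the exponents by $1$. This transfers information down through successive $j$. The boundary losses are expected to be $O(\theta^{3/2})$. The final step is to recombine the intervals, checking that the losses in $X/H$ are absorbed, so that the total count is $X^{2r-\theta+\epsilon}$.
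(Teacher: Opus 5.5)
There is a genuine gap, and it sits exactly at the step you call the main obstacle: the claim there is false, and in fact backwards.

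First, a parameter correction. With $H=X^{1-\delta}$ the Taylor tail is $X^{\theta-\delta(k+1)}$, which is negligible only when $k\gg\theta^{3/2}$. Truncating at $k\approx\theta+c\theta^{1/2}$ requires a \emph{short} interval, $H=X^{1/v}$ with $v\asymp\theta^{1/2}$ and $k\ge\theta v/(v-1)$.

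Now put $c_j=\binom{\theta}{j}h^{\theta-j}\asymp X^{\theta-j}$. These coefficients decrease with $j$, so the inequality rigidly controls the \emph{low} moments and leaves the high ones free. If $s_1=\dots=s_{j-1}=0\ne s_j$ with $j\le v-2$, then $|c_js_j|\gg X^{\theta-j}$, while $\sum_{l>j}|c_ls_l|\ll X^{\theta-j-1+(j+1)/v}=o(X^{\theta-j})$. So $s_1=\dots=s_{v-2}=0$ is forced. By contrast, a tuple with $s_1=\dots=s_{k-1}=0\ne s_k$ has $|c_ks_k|\ll X^{\theta-k(1-1/v)}\ll1$, so nothing forces $s_k=0$. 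This holds for every $h$, so no derivative or large-sieve averaging in $h$ can produce your conclusion. Non-integrality of $\theta$ only guarantees $\binom{\theta}{j}\ne0$. The low-degree rigidity is what Lemma \ref{lem:arkzhit} quantifies, and it is why the count there carries the term $\frac{v-1}{2}$ (Remark \ref{rem:1}, Lemma \ref{lem:taylor}).

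More fundamentally, no local count can rescue a H\"older localisation of all $2r$ variables into one interval $I=(T,T+X^{1/v}]$ once $v\ge3$. Group the $r$-tuples $\mathbf{x}\in I^r$ by the power sums $\sum_i(x_i-T)^j$, $1\le j\le v-1$, and by the interval of length $1/2$ containing $\sum_ix_i^\theta$. Within a class this sum varies by only $\ll X^{\theta-v}H^v=X^{\theta-v+1}$. There are $\ll H^{v(v-1)/2}X^{\theta-v+1}$ classes, so Cauchy--Schwarz gives $\gg X^{2r/v+(v-1)/2-\theta}$ local solutions. After paying $(X/H)^{2r}$ for H\"older, your bound is therefore no better than $\kappa X^{2r-\theta+(v-1)/2}$. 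Even a sharper localisation costing $(X/H)^{2r-1}$ leaves a loss of $X^{(v-1)(v-2)/(2v)}$, which is $X^{\asymp\theta^{1/2}}$ for your $v$.

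The missing idea is the paper's staged compression, which recovers exactly this $X^{(n-1)/2}$:
\begin{itemize}
\item Poulias's step $X\to X^{1/2}$ already saves $X^{1/2}$.
\item Lemmas \ref{lem:meancase2} and \ref{lem:meancasegen} then pass from length $X^{1/(k-1)}$ to $X^{1/k}$ for $k=3,\dots,n$, each time leaving $2m_k$ of the variables in the coarser interval.
\item Those retained variables satisfy \eqref{eq:diopairsystem} with $Y=X^{1/(k-1)}$. The bounds in degrees $\le k-2$ come from the exact equations inherited from the previous step, since the other variables lie in the short interval. The degree-$(k-1)$ bound comes from Taylor expansion. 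This, not genericity of the linear form in $h$, is where that system arises.
\item Admissibility of $(m_k,k)$ saves $X^{1/2}$ per step, while the remaining variables acquire the exact degree-$(k-1)$ equation.
\end{itemize}
The price is $2M_n\approx n^3/3$ extra variables. Balancing this against $k=\lceil\theta(1-1/n)^{-1}\rceil$ in Lemma \ref{lem:taylor} with $n\asymp\theta^{1/2}$ is what yields $\theta^2+O(\theta^{3/2})$.
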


Our proof actually gives a slightly sharper bound on the lower order terms for large $\theta$. In fact, for $\theta > 3$ one only needs 
\[2r \geq \theta^2 +\frac{7}{3}\theta^{3/2}+6\theta+\frac{20}{3}\theta^{1/2}, \] 
and for $\theta > 4$ one only needs 
\[ 2r \geq \theta^2+\frac{2^{11/4}}{3} \theta^{3/2}+(3+\frac{5\sqrt{2}}{2})\theta+2^{1/4}(\frac{11}{3}+\frac{3\sqrt{2}}{2})\theta^{1/2}. \]

Our method is an iterative process of compressing the ranges of variables. We discuss how the secondary terms might be further reduced using this process. The stipulation $\kappa \geq 1$ in \eqref{eq:meanval} is just a matter of convenience; the same estimate holds as long as $\kappa \geq c$ for some constant $c$ independent of $X$. We also note that by Theorem \ref{thm:main} and a simple interpolation argument, \eqref{eq:meanval} holds for all $p \in \mathbb{R}$ satisfying $2\lfloor p \rfloor \geq \theta^2+9\theta^{3/2}$ as well. However, our main focus is on even integer exponents since these are the exponents that appear in the applications we consider.  We will derive Theorem \ref{thm:main} as a special case of a more general result. We introduce the notation 
\[ g(\alpha) = g(\alpha;X) = \sum_{X < x \leq 2X} e(\alpha x^{\theta}) \]
to be used for the rest of this paper. We begin with a definition. 

\begin{definition} \label{def:diopair}
Fix $m, k \in \mathbb{N}$ with $k \geq 2$. We call $(m,k)$ an admissible Diophantine pair (which we shorten to just admissible or Diophantine pair when the context is clear) if for any positive constants $C_0, \ldots, C_{k-1}$ the set $\mathcal{C}_{m,k}(Y)$ of integer solutions $(x_1,\ldots,x_{2m})$ to the system 
\begin{align} \label{eq:diopairsystem}
|x_1^i +\cdots+x_{m}^i-x_{m+1}^i-\cdots-x_{2m}^i| \leq C_i Y^{i(1-1/k)} \quad \text{ for } \quad 1 \leq i \leq k-1 
\end{align}
with $|x_i| \leq C_0Y$ 
satisfies $|\mathcal{C}_{m,k}(Y)| \ll_{C_0,\ldots,C_{k-1}, \epsilon} Y^{2m-\frac{1}{2}(k-1)+\epsilon}$ for any $\epsilon > 0$. 
\end{definition} 

The analysis of Diophantine pairs plays a key role in our proof. We will discuss Diophantine pairs further at the end of the paper. Theorem \ref{thm:main} follows from the following result. 
\begin{theorem} \label{thm:general}
Suppose $\theta > 3$ and $3 \leq n \leq \lfloor \theta \rfloor$. Let $\{(m_k,k)\}_{k=3}^{n}$ be a sequence of admissible Diophantine pairs and let $M_n = m_3+\ldots+m_n$. 
Then \eqref{eq:meanval} holds for large $X > 0$ as long as \[ 2r \geq \lceil \theta(1-1/n)^{-1} \rceil(\lceil \theta(1-1/n)^{-1}\rceil+1)+2M_n. \] 
\end{theorem}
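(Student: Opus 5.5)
Throughout, write $2r = 2s + 2M_n$ with $s \geq K(K+1)/2$, where $K = \lceil \theta(1-1/n)^{-1}\rceil$. The plan is an iterated compression of variable ranges, in the style of Poulias's argument, which used Vinogradov's mean value theorem with $k = \lfloor 2\theta\rfloor+1$. By orthogonality, the integral in \eqref{eq:meanval} is bounded by $\kappa$ times the number of solutions of
\[ \Big| \sum_{j=1}^{r} (x_j^\theta - y_j^\theta)\Big| \ll \kappa^{-1} \]
with $X < x_j, y_j \leq 2X$. It therefore suffices to show that this count is $\ll X^{2r-\theta+\epsilon}$. The strategy is to split the $2r$ variables into one block of $2s$ variables and, for each $k$, a block of $2m_k$ variables. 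The Vinogradov mean value theorem will control the first block and the Diophantine pairs $(m_k,k)$ will control the others.

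\emph{Step 1 (compression at scale $k$).} Cover $(X,2X]$ by short intervals of length $H_k = X^{1-1/k}$ with centres $z$. Taylor expansion gives
\[ (z+h)^\theta = \sum_{i=0}^{k-1} \binom{\theta}{i} z^{\theta-i}h^i + O\big(X^{\theta-k}H_k^k\big), \]
and the error term is $O(X^{\theta-1})$. By H\"older's inequality we can put $2m_k$ variables into a common short interval around $z$, at the cost of a factor of the number of intervals. The equation then forces the power sums $\sum (h_j^i - h_j'^i)$, with $1\leq i\leq k-1$, to lie in ranges of size $\ll H_k^{\,i}$ times a factor. This is exactly the system \eqref{eq:diopairsystem} with $Y = H_k$, since $Y^{i(1-1/k)}$ is the natural normalization once the other variables' contribution is absorbed. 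Admissibility of $(m_k,k)$ then saves $Y^{(k-1)/2}$ relative to the trivial bound. I will do this step successively for $k = n, n-1, \ldots, 3$. Each step shrinks the effective range of the remaining variables and contributes $2m_k$ variables, which accounts for the $2M_n$ term.

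\emph{Step 2 (Vinogradov on the main block).} After the compressions, the remaining $2s$ variables live at an effective scale $P = X^{1-1/n}$ relative to which $\theta$ behaves like a degree-$\theta/(1-1/n)$ problem. Approximating $x^\theta$ by a polynomial of degree $K$ in the localized variable, the main conjecture in Vinogradov's mean value theorem with degree $K$ and $s \geq K(K+1)/2$ gives $J_{s,K}(P) \ll P^{2s - K(K+1)/2+\epsilon}$. Comparing this with the $\kappa X^{2s-\theta}$ target uses $K(1-1/n) \geq \theta$.

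\emph{Step 3 (bookkeeping).} I will combine the savings from all blocks and check that the losses from the H\"older splittings and the interval counts are compensated. The $i$-th power sums need to be controlled to precision $X^{\theta - i}$-scaled, and that is where the exponent $i(1-1/k)$ matters.

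I expect the main obstacle to be this exponent accounting: verifying that the savings $Y^{(k-1)/2}$ at each level exactly offset the loss incurred by localizing to intervals of length $X^{1-1/k}$, so that no $X$-power is lost in the iteration. I would first try the case $n=3$ alone, where only one Diophantine pair is used, and then induct on $n$.
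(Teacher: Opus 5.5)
There is a genuine gap in Step~1. Nothing in it forces the degree-$i$ power sums ($1\le i\le k-1$) of the $2m_k$ localized variables into the ranges $\le C_iY^{i(1-1/k)}$ that \eqref{eq:diopairsystem} requires. On intervals of length $X^{1-1/k}$, your own remainder $O(X^{\theta-1})$ dwarfs the precision $\kappa^{-1}\le 1$ of the inequality. In any case, a single inequality gives at most one linear relation among $k-1$ power sums, never $k-1$ separate bounds. The phrase ``once the other variables' contribution is absorbed'' is exactly the missing mechanism.

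In the paper the scales are nested and shrinking, $X^{1/2}\supset X^{1/3}\supset\cdots\supset X^{1/n}$, and they are processed for $k=3,\ldots,n$ in that order. At stage $k$ the $2m_k$ special variables \emph{stay} in the current interval of length $Y=X^{1/(k-1)}$. It is the other $2r-2M_k$ variables that are pigeonholed into one subinterval of length $X^{1/k}=Y^{1-1/k}$, at cost $X^{(2r-2M_k)/(k(k-1))}$. The exact equations $\sum_i(x_i^j-x_{r_0+i}^j)=0$ for $1\le j\le k-2$ were obtained at earlier stages and carried along as extra frequencies $\boldsymbol{\beta}_{k-2}$ via Lemma~\ref{lem:poulcount}. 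They are translation invariant, so after shifting by the left endpoint $T_{k-1}$ of the subinterval they transfer the trivial bound $\ll X^{j/k}$ from the short variables to the special block. Taylor expansion about $T_{k-1}$, where all terms of degree at most $k-2$ now vanish, bounds the degree-$(k-1)$ sum of the special block by $\ll X^{(k-1)/k}$. That is \eqref{eq:diopairsystem} with $Y=X^{1/(k-1)}$, and admissibility saves $Y^{(k-1)/2}=X^{1/2}$. A second Taylor expansion shows the remaining variables' degree-$(k-1)$ sum is $O(1)$, which Lemma~\ref{lem:poulcount} turns into a new exact equation for the next stage. Your version swaps the roles of the two blocks, inverts the scales, and runs the iteration backwards. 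It also ends at scale $X^{2/3}$, which is inconsistent with the $X^{1-1/n}$ you use in Step~2.

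Step~2 also needs correcting, and it is where the real bookkeeping lives. The surviving $2s$ variables lie in an interval of length $X^{1/n}$. The quantity $X^{1-1/n}$ is only the ratio $T/X^{1/n}$ that makes Taylor expansion to degree $K=\lceil\theta(1-1/n)^{-1}\rceil$ sufficient. Vinogradov's theorem is applied at scale $X^{1/n}$, uniformly in the shift vector $\mathbf h$ with $h_j=\sum_i(z_i^j-z_{s+i}^j)$, giving $X^{(2s-K(K+1)/2)/n+\epsilon}$. One must then count integer vectors $\mathbf h$ with $|h_j|\ll X^{j/n}$ and $|\sum_{j\le K}\binom{\theta}{j}T^{\theta-j}h_j|<1$. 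By Lemma~\ref{lem:arkzhit} with $\gamma=1/n$ (Remark~\ref{rem:1}), this count is $\ll X^{K(K+1)/(2n)+(n-1)/2-\theta}$. It is the only source of the factor $X^{-\theta}$, and its excess $X^{(n-1)/2}$ is what the whole iteration exists to pay for.

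So the accounting is not ``pair savings versus localization losses''. The trivial factors multiply to exactly $X^{2r}$: the cost $X^{r}$ of the first localization, $X^{(2r-2M_k)/(k(k-1))}$ and $X^{2m_k/(k-1)}$ at stage $k$, and $X^{2s/n}$ at the end. The $n-1$ half-power savings cancel the $X^{(n-1)/2}$: $n-2$ come from the pairs $(m_k,k)$, and one comes from Poulias's initial reduction of $(X,2X]$ to a single interval of length $X^{1/2}$ at cost $X^{r-1/2}$ rather than $X^r$. Your plan has neither the $\mathbf h$-count nor this initial saving. Without the latter, even a correct Step~1 leaves you short by $X^{1/2}$.
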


The most immediate applications of these mean value estimates are to Diophantine counting problems of large non-integer degree. For real $\Lambda > 0$, non-integer $\theta > 1$, fixed $\tau > 0$ and integer $s$, define $R_{s,\theta,\tau}(\Lambda)$ to be the number of solutions $(x_1,\ldots,x_s) \in \mathbb{N}^s$ to the Diophantine inequality 
\begin{align} \label{eq:maindioph}
|x_1^{\theta}+\cdots+x_s^{\theta}-\Lambda| < \tau. 
\end{align}
Let $\widetilde{G}_0(\theta)$ be the smallest number of variables which for the expected asymptotic formula for $R_{s,\theta,\tau}(\Lambda)$ holds as $\Lambda \to \infty$. 
Poulias \cite{Poulias1} proves that $\widetilde{G}_0(\theta) \leq (\lfloor 2\theta \rfloor+1)(\lfloor 2\theta \rfloor +2)+1$.  
Theorem \ref{thm:main} improves the main term in the bound on $\widetilde{G}_0(\theta)$ by a factor of $4$. 
\begin{corollary} \label{cor:Gbound}
For $\theta > 3$, one has $\widetilde{G}_0(\theta) \leq \theta^2+9\theta^{3/2}+1$. 
\end{corollary}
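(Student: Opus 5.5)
This follows the standard Hardy--Littlewood circle-method argument for Diophantine inequalities with real exponents, as in Poulias \cite{Poulias1}. The only input that changes is the mean value estimate: Poulias's bound $(\lfloor 2\theta\rfloor+1)(\lfloor 2\theta\rfloor+2)$ is replaced by Theorem \ref{thm:main}, which gives \eqref{eq:meanval} for $2r\ge \theta^2+9\theta^{3/2}$.

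\textbf{Setup.} Take $s\ge \theta^2+9\theta^{3/2}+1$ and $X\asymp \Lambda^{1/\theta}$. Write
\[
R_{s,\theta,\tau}(\Lambda)\approx \int_{\mathbb{R}} g(\alpha)^s e(-\alpha\Lambda)K_\tau(\alpha)\d\alpha,
\]
where $K_\tau$ is a smooth Fourier kernel of Davenport--Heilbronn type, with $K_\tau(\alpha)\ll \min(\tau,|\alpha|^{-2})$. Split $\mathbb{R}$ into three ranges:
\begin{itemize}
\item a major arc $|\alpha|\le X^{-\theta+\delta}$;
\item a minor range $X^{-\theta+\delta}<|\alpha|\le \kappa$;
\item a trivial range $|\alpha|>\kappa$, where $\kappa = X^{A}$ (or fixed, depending on the kernel).
\end{itemize}

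\textbf{Major and trivial ranges.} On the major arc, approximate $g(\alpha)$ by $\int_X^{2X}e(\alpha t^\theta)\d t$ using Euler--Maclaurin or van der Corput. This produces the singular integral and the main term $\asymp \tau X^{s-\theta}$, exactly as in Poulias; no singular series appears because $\theta$ is not an integer. The trivial range is handled by the decay of $K_\tau$.

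\textbf{Minor range.} Write $s=2r+t$ with $t\ge 1$, where $2r$ is the largest even integer permitted by Theorem \ref{thm:main}. Bound
\[
\int |g|^s|K_\tau| \le \sup_{\mathfrak m}|g|^{t}\int_{-\kappa}^{\kappa}|g|^{2r}\d\alpha ,
\]
and the integral is at most $\kappa X^{2r-\theta+\epsilon}$ by Theorem \ref{thm:main}. A dyadic decomposition in $|\alpha|$ together with the kernel decay removes the factor of $\kappa$. We then need a Weyl-type saving $\sup_{\mathfrak m}|g(\alpha)|\ll X^{1-\sigma}$ for some $\sigma>0$ strictly larger than the $\epsilon$ loss. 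For non-integer $\theta$ this follows from van der Corput's $k$-th derivative test on $X^{-\theta+\delta}<|\alpha|\le \kappa$; that is the minor arc lemma of Poulias and of Arkhipov--Zhitkov.

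\textbf{Main obstacle.} The delicate step is matching the exponent count. If $s-1$ is odd, the Weyl saving must absorb $\epsilon$ from one spare variable. If $s$ is exactly $\theta^2+9\theta^{3/2}+1$ with $2r$ equal to the smallest even integer $\ge\theta^2+9\theta^{3/2}$, then the single extra variable must provide a power saving that beats $X^{\epsilon}$. I would use the interpolation remark after Theorem \ref{thm:main}, namely that \eqref{eq:meanval} holds for real $p$ with $2\lfloor p\rfloor \ge \theta^2+9\theta^{3/2}$. Applying it with $p=(s-1)/2$ lets all $s-1$ copies go into the mean value and one copy into the supremum, so a fixed Weyl saving suffices.
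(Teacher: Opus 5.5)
Your overall route is the paper's. The paper proves this corollary in one line, by substituting Theorem \ref{thm:main} for Theorem 3.4 of \cite{Poulias1} and running Poulias's argument unchanged, and your kernel, major-arc, dyadic and Weyl-bound steps sketch that argument faithfully.

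\textbf{The gap.} The step that fails is the one you add: your resolution of the ``main obstacle''. The interpolation remark needs $2\lfloor p\rfloor\ge\theta^2+9\theta^{3/2}$. With $p=(s-1)/2$ and $s-1$ odd, this condition reads $s-2\ge\theta^2+9\theta^{3/2}$, which is exactly the inequality whose failure created the obstacle. So the appeal is circular.

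In fact the remark is trivial. It is just
\[ \int|g|^{2p}\le X^{2p-2\lfloor p\rfloor}\int|g|^{2\lfloor p\rfloor}, \]
so it never gives an odd moment beyond what the even moment below it already gives. Your diagnosis is also slightly off. When $2r=s-1$, the single spare variable's saving $X^{-\sigma}$ easily beats $X^{\epsilon}$. The real problem is that there may be no admissible even $2r\le s-1$ at all.

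The problem is also worse than parity. Since $\widetilde G_0(\theta)$ is an integer, the corollary asks for the formula at $s=\lfloor\theta^2+9\theta^{3/2}\rfloor+1$. Then $s-1<\theta^2+9\theta^{3/2}$ whenever the right side is not an integer. So Theorem \ref{thm:main}, as literally stated, supplies no usable even moment with $2r\le s-1$, whatever the parity.

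\textbf{The fix.} Use the slack built into the constant $9$. The proof of Theorem \ref{thm:main} actually gives \eqref{eq:meanval} for $2r\ge\theta^2+\tfrac73\theta^{3/2}+6\theta+\tfrac{20}{3}\theta^{1/2}$ when $\theta>3$, and
\[ \theta^2+\tfrac73\theta^{3/2}+6\theta+\tfrac{20}{3}\theta^{1/2}\le\theta^2+9\theta^{3/2}-2 \quad (\theta>3). \]
This is equivalent to $\tfrac{20}{3}\theta^{3/2}-6\theta-\tfrac{20}{3}\theta^{1/2}-2\ge0$, which holds at $\theta=3$ (the left side is about $3.09$) and is increasing in $\theta$.

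Now take any integer $s\ge\lfloor\theta^2+9\theta^{3/2}\rfloor+1$, and let $2r$ be the largest even integer with $2r\le s-1$. Then $2r\ge s-2>\theta^2+9\theta^{3/2}-2$, so \eqref{eq:meanval} holds for this $2r$. You keep $s-2r\ge1$ variables for the Weyl saving on the minor arcs, and the rest of your argument goes through as in Poulias.
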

This bound follows by using Theorem \ref{thm:main} in place of Theorem 3.4 in \cite{Poulias1} and proceeding exactly in the same way as done in that paper. 
We note that our methods also lead to further improvements in the lower order terms in the above bound for $\widetilde{G}_0(\theta)$. However, in the interest of reducing complexity our focus in Corollary \ref{cor:Gbound} has been in optimizing for the main term. We discuss possible further improvements at the end of the paper. 

Similarly, write $R_{s,\theta}(N)$ to be the number of solutions to the equation 
\[ \lfloor x_1^{\theta}\rfloor+\cdots+\lfloor x_s^{\theta} \rfloor = N, \]
and let $\widetilde{G}_1(\theta)$ be the minimum number of variables required to prove the expected asymptotic formula for $R_{s,\theta}(N)$ as $N \to \infty$. 
This equation was first considered by Deshouillers \cite{Des}, who proved the existence of solutions as long as 
\[ s \geq 4\theta \log\theta+2\log\log\theta+14, \]
and that $\widetilde{G}_1(\theta) \leq 6\theta^{3}(\log(\theta)+14)$. Arkhipov and Zhitkov \cite{ArkhZhit} reduced this latter bound to to $\widetilde{G}_1(\theta) \leq 22\theta^2(\log(\theta)+4)$. Using the methods of \cite{Poulias1} and applying the now proven Main Cojecture of Vinogradov's Mean Value Theorem, one may further reduce the required number of variables to $\widetilde{G}_1(\theta) \leq (\lfloor 2\theta \rfloor+1)(\lfloor 2\theta \rfloor +2)+1$. Using Theorem \ref{thm:main} in place of Lemma 3 in \cite{ArkhZhit} and proceeding as done in that paper, one has the following. 
\begin{corollary} \label{cor:Gbound2}
For $\theta > 3$, one has $\widetilde{G}_1(\theta) \leq \theta^2+9\theta^{3/2}+1$. 
\end{corollary}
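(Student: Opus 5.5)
The plan is to rerun the circle-method argument of Arkhipov and Zhitkov \cite{ArkhZhit} unchanged, except that wherever they invoke their Lemma 3 (a mean value estimate of the shape \eqref{eq:meanval} valid only for $2r \gg \theta^2\log\theta$) we invoke Theorem \ref{thm:main} instead. Concretely, put $P = N^{1/\theta}$ and $G(\alpha) = \sum_{x\leq P} e(\alpha\lfloor x^{\theta}\rfloor)$, so that $R_{s,\theta}(N) = \int_0^1 G(\alpha)^s e(-\alpha N)\d\alpha$. Take $s \geq 2r+1$ with $2r$ the least even integer satisfying $2r \geq \theta^2+9\theta^{3/2}$. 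The target is $s \geq \theta^2+9\theta^{3/2}+1$, which presumes $\theta^2+9\theta^{3/2}$ is itself an even integer or is handled by a rounding convention. To be safe I would first check whether the interpolation remark after Theorem \ref{thm:main} covers a non-even exponent $2r=s-1$; if it does not, I would absorb the parity loss into the convention used for the stated bound.

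The first step converts the integer-valued phase back to the real phase $x^\theta$. Write $e(\alpha\lfloor y\rfloor) = e(\alpha y)\,e(-\alpha\{y\})$. The function $t \mapsto e(-\alpha t)$ on $[0,1)$ has Fourier expansion $\sum_{h} c_h(\alpha) e(ht)$ with $c_h(\alpha) = \frac{1-e(-\alpha)}{2\pi i(\alpha+h)}$, so $|c_h(\alpha)| \ll (1+|h|)^{-1}$ uniformly for $\alpha \in [0,1]$. Truncating at $|h| \leq H = P^{\delta}$ costs a small error, controlled either by a smoothed version of the sawtooth or by the usual discrepancy bound for $\{x^\theta\}$ as in \cite{ArkhZhit}. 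This gives $G(\alpha) \approx \sum_{|h|\leq H} c_h(\alpha)\, g(\alpha+h)$, where $g$ is now a sum over a dyadic piece $X<x\leq 2X$ after splitting $[1,P]$ dyadically. By H\"older,
\[ |G(\alpha)|^{2r} \ll \Big(\sum_{|h|\leq H}|c_h|\Big)^{2r-1}\sum_{|h|\leq H}|c_h|\,|g(\alpha+h)|^{2r}, \]
and integrating over $[0,1]$ with Theorem \ref{thm:main} for $\kappa = H+1$, or better on each unit interval separately, gives $\int_0^1|G|^{2r}\d\alpha \ll P^{2r-\theta+\epsilon}$, since the weights only contribute $(\log H)^{2r}$.

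Next comes the dissection. Major arcs are a short interval $|\alpha| \leq P^{-\theta+\nu}$ around $0$; because $\theta$ is non-integral there are no further rational major arcs. On them, the same Fourier expansion together with partial summation approximates $G(\alpha)$ by $\int_0^P e(\alpha t^\theta)\d t$. This yields the main term $\Gamma(1+1/\theta)^s\Gamma(s/\theta)^{-1}N^{s/\theta-1}$ with singular series $1$, exactly as in \cite{ArkhZhit}. On the minor arcs I would bound
\[ \int_{\mathfrak m}|G|^{s}\d\alpha \leq \sup_{\mathfrak m}|G|^{s-2r}\int_0^1|G|^{2r}\d\alpha. \]
The supremum is at most $P^{1-\sigma}$ for some $\sigma>0$, by the van der Corput / Vinogradov-type Weyl bound for $\sum e(\beta x^\theta)$ with $P^{-\theta+\nu}\leq|\beta|\leq H+1$, which is already supplied in \cite{ArkhZhit} and \cite{Poulias1}. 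The mean value factor is $P^{2r-\theta+\epsilon}$, so the minor arcs contribute $o(P^{s-\theta})$ once $\epsilon<\sigma$.

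The step needing the most care is the interface between the Fourier truncation and the mean value: I must ensure that the exponent $\delta$ in $H=P^\delta$ and the width $\nu$ of the major arc are compatible with the fixed power saving $\sigma$ on the minor arcs. The $\kappa$-linear dependence in \eqref{eq:meanval} is what keeps the sum over $h$ harmless, since it means each shifted unit interval contributes $P^{2r-\theta+\epsilon}$. Everything else is inherited verbatim from \cite{ArkhZhit}, so the only genuinely new input is that the threshold $2r$ for the mean value is now $\theta^2+9\theta^{3/2}$.
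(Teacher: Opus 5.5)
Your top-level plan is the paper's own: the paper justifies the corollary only by saying that one substitutes Theorem~\ref{thm:main} for Lemma~3 of \cite{ArkhZhit} and proceeds as in that paper. The gap is in the step you do spell out, the bound $\int_0^1|G(\alpha)|^{2r}\d\alpha\ll P^{2r-\theta+\epsilon}$.

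Your H\"older inequality controls only the truncated sum $G_H(\alpha)=\sum_{|h|\leq H}c_h(\alpha)g(\alpha+h)$, not $G$ itself. The remainder $E=G-G_H$ has the size of $\#\{x\leq P:\|x^{\theta}\|<\eta\}$ with $\eta\approx H^{-1}$; smoothing the sawtooth only moves the error onto this set. So $E$ is governed by the discrepancy of $\{x^{\theta}\}$, for which the available bounds are only of the shape $P^{1-c\theta^{-2}}$, however large you take $H$. Bounding $E$ pointwise, its contribution to the $2r$-th moment can be as large as $P^{2r(1-c\theta^{-2})}$. You need $P^{2r-\theta}$, i.e.\ $E\ll P^{1-\theta/(2r)}\approx P^{1-1/\theta}$ when $2r\approx\theta^{2}$. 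No choice of $\delta$ or $\nu$ repairs this, so this route cannot reach the range of $r$ the corollary is about.

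The remedy is to avoid the expansion for the mean value altogether. The integral $\int_0^1|G|^{2r}\d\alpha$ counts the $\mathbf{x}\in[1,P]^{2r}$ with $\sum_{i\leq r}\lfloor x_i^{\theta}\rfloor=\sum_{i\leq r}\lfloor x_{r+i}^{\theta}\rfloor$. Every such $\mathbf{x}$ satisfies $|\sum_{i\leq r}(x_i^{\theta}-x_{r+i}^{\theta})|<r$. The kernel argument of Lemma~\ref{lem:poulcount} with $\delta=r$ bounds the number of these by $\ll\int_{-1}^{1}|\sum_{x\leq P}e(\alpha x^{\theta})|^{2r}\d\alpha$. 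A dyadic split, H\"older's inequality and Theorem~\ref{thm:main} then give $\ll P^{2r-\theta+\epsilon}$.

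The Fourier expansion is then needed only for the pointwise minor-arc bound $\sup_{\mathfrak m}|G|\ll P^{1-\sigma}$, where a truncation error of size $P^{1-c\theta^{-2}}$ is harmless. With that change the rest of your outline goes through: the major arc near $0$ modulo $1$, singular series $1$, and $\sup|G|^{s-2r}\int|G|^{2r}$.

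Your parity worry also needs no convention. For integer $s\geq\theta^2+9\theta^{3/2}+1$, choose the even $2r\in\{s-1,s-2\}$. Then $2r\geq\theta^2+9\theta^{3/2}-1\geq\theta^2+\frac{7}{3}\theta^{3/2}+6\theta+\frac{20}{3}\theta^{1/2}$ for $\theta>3$, which is the sharper threshold stated after Theorem~\ref{thm:main}, and $s-2r\geq 1$.
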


Diophantine counting problems of non-integer degree have a decades long history and have seen a revival of interest in recent years. There has recently been additional interest in extending results from additive number theory, such as those which hold for classical Waring bases, to the real exponent setting. Madritsch \cite{mad26} has proven asymptotic formulas for representations by a more general class of pseudo-polynomials. The author proves the existence of almost-diagonal solutions to \eqref{eq:maindioph} in \cite{Bhargava2}. K\"{u}fner \cite{Kuf} considers counting problems with varying exponents and proves an effective version of the Fre\u iman-Scourfield Theorem for real-exponents, an analog to the works of Fre\u iman \cite{Freiman}, Scourfield \cite{Scour}, and Br\"{u}dern and Wooley \cite{BWFrei}. T\'{a}fula \cite{taf26} has proven the existence of economical asymptotic subbases of the sequence $\{ \lfloor x^{\theta} \rfloor: x \in \mathbb{N} \}$ for $\theta > 1$ and $s \geq 5$ for $\theta \in (1,2)$ and $s \geq (\lfloor 2\theta \rfloor+1)(\lfloor 2\theta \rfloor +2)+1$ for $\theta > 2$. The results and methods of this paper might be useful in further reducing the number of variables required to prove several of these results. Indeed, using Theorem \ref{thm:main} in the argument of \cite{Bhargava2} immediately allows one to show the existence of almost-diagonal solutions to \eqref{eq:maindioph} for $s \geq \theta^2 +9\theta^{3/2}+1$. The approach to this problem is similar to that of Corollary \ref{cor:Gbound}. 

The paper is organized as follows. In Section \ref{sec:prelim} we prove some auxiliary lemmas which will be used in Section \ref{sec:meanval} to prove Theorems \ref{thm:main} and \ref{thm:general}. In Section \ref{sec:conc} we discuss possible improvements to the methods and results of this paper, and also a related conjecture on Diophantine pairs. As a quick word on notation, we use Vinogradov's notation $A \ll B$ and $A = O(B)$ in the usual sense, and write $A \asymp B$ to mean $A \ll B$ and $B \ll A$. We use the standard shorthand $e(\alpha) = e^{2\pi i \alpha}$. We use $\lceil t \rceil $ to denote the smallest $m \in \mathbb{Z}$ such that $t \leq m$ and $\lfloor t \rfloor$ to denote the largest integer not exceeding $t$.

\subsection{Acknowledgments} The author is grateful to Prof. T.D. Wooley, for many enlightening discussions and helpful suggestions and for funding support from NSF grant DMS-2502625.

\section{Preliminary Lemmas} \label{sec:prelim}
The following lemma is an extension of Lemma 1 from \cite{ArkhZhit}. Write 
\[ b_i = \binom{\theta}{i}. \]

\begin{lemma} \label{lem:arkzhit}
Let $\theta > 1$ and let $0 \leq \gamma \leq 1/2$. Let $m$ be an integer with $m \geq \theta(1-\gamma)^{-1} $, and let $0 < P \in \mathbb{R}$ be large enough. Let $r$ be the unique integer satisfying $\theta(1-\gamma)^{-1}-1 < r \leq \theta(1-\gamma)^{-1}$, and define $s_0 = \lfloor \gamma^{-1}-1\rfloor $. Define 
\[\mathcal{H} = b_1P^{\theta-1}x_1+\cdots+b_mP^{\theta-m}x_m. \]
Then for any positive real number $t$, the number $\mathcal{L}(P)$ of solutions $(x_1, \cdots, x_m)$ with $|x_i| \ll_{i, \theta} tP^{\gamma i}$ to the inequality 
\[ |\mathcal{H}| \leq t \]
satisfies 
\[ \mathcal{L}(P) \ll_{\theta} t^mP^{\frac{\gamma}{2}m(m+1)-\theta+ \Upsilon}, \]
where 
\[ \Upsilon = -\frac{\gamma}{2}s_0(s_0+1)+(s_0+1)(1-\gamma) \quad \text{ if } \quad s_0+1 < r \]
and 
\[ \Upsilon = -\frac{\gamma}{2}r(r+1)+r \quad \text{ if } \quad s_0+1 \geq r. \] 
\end{lemma}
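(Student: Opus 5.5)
The plan is to treat $\mathcal{L}(P)$ as a count of lattice points in a convex body and to bound it by Davenport's lemma: the number of integer points in a convex body $\mathcal{B}\subset\mathbb{R}^m$ satisfying the usual hypotheses is at most $1+\sum_{S}\mathrm{vol}_{|S|}(\pi_S\mathcal{B})$ up to constants depending on $m$. Here $S$ runs over nonempty subsets of $\{1,\ldots,m\}$ and $\pi_S$ is the orthogonal projection onto the coordinates in $S$. Take
\[ \mathcal{B}=\{x\in\mathbb{R}^m:\ |x_i|\leq c_i tP^{\gamma i}\ (1\le i\le m),\ |\mathcal{H}(x)|\leq t\}, \]
which is a box intersected with a symmetric slab, hence convex. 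I will assume $t\gg 1$ so that $t^{|S|}\le t^m$; the case $t\ll1$ should be handled either by enlarging $t$ to a constant or by a separate check of the trivial term.

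\textbf{Projection volumes.} Fix $S$ and let $i=\min S$. The projection $\pi_S\mathcal{B}$ lies in the box $\prod_{j\in S}[-c_jtP^{\gamma j},c_jtP^{\gamma j}]$. Along the $x_i$ direction it is further confined: once the other coordinates in $S$ are fixed, the remaining coordinates (indices not in $S$, so some below $i$ and some above) can shift $\mathcal{H}$. The key point is that this shift is controlled. For $j>i$ the terms $b_jP^{\theta-j}x_j$ have size at most $tP^{\theta-j(1-\gamma)}$, which is smaller than the $x_i$-term scale. Coordinates with index below $i$ do not appear in $S$, but they must be handled. I expect the cleanest route is to instead order the Davenport projections as the initial-segment complements, namely to project away $x_1,\ldots,x_{i-1}$, since the lemma allows any permutation of coordinates. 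One then checks that the dominant contributions come from $S=\{i,i+1,\ldots,m\}$. For such $S$, the section in $x_i$ has length $\ll\min\bigl(tP^{\gamma i},\,tP^{i-\theta}\bigr)$ up to a bounded factor. This gives
\[ \mathrm{vol}(\pi_S\mathcal{B})\ll t^{m-i+1}P^{\frac{\gamma}{2}(m(m+1)-i(i-1))}\min\bigl(1,P^{i(1-\gamma)-\theta}\bigr). \]

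\textbf{Optimizing over $i$.} For $i\le r$ we have $i(1-\gamma)\le\theta$, so the exponent equals $\frac{\gamma}{2}m(m+1)-\theta+\phi(i)$ with
\[ \phi(i)=-\tfrac{\gamma}{2}i(i-1)+i(1-\gamma). \]
The increment satisfies $\phi(i+1)-\phi(i)=1-\gamma-\gamma i$, which is positive exactly when $i<\gamma^{-1}-1$. Hence $\phi$ is maximised at $i=\min(s_0+1,r)$. This gives
\[ \phi(s_0+1)=-\tfrac{\gamma}{2}s_0(s_0+1)+(s_0+1)(1-\gamma)\quad\text{or}\quad \phi(r)=-\tfrac{\gamma}{2}r(r+1)+r, \]
which are exactly the two cases of $\Upsilon$. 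For $i>r$ the minimum is $1$. Then the exponent $\frac{\gamma}{2}(m(m+1)-i(i-1))$ is decreasing in $i$, so it is dominated by its value at $i=r+1$. That value in turn is at most $\frac{\gamma}{2}m(m+1)-\theta+\phi(r)$, using $\theta<(r+1)(1-\gamma)$. The term $1$ and the non-initial-segment $S$ are dominated similarly.

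\textbf{Main obstacle.} The delicate step is justifying that the general $S$ (not just tails $\{i,\ldots,m\}$) and the unconstrained lower coordinates do not spoil the slab-thickness bound. The danger is that a projection which drops $x_1$ loses the strong constraint entirely. If Davenport's lemma with arbitrary $S$ is too lossy, I would fall back on a direct fibering argument. In that argument I fix $x_{i+1},\ldots,x_m$ freely, choose $x_i$ in an interval of length $\ll 1+tP^{i-\theta}$, and then show that the remaining $x_1,\ldots,x_{i-1}$ contribute $O(1)$. That last claim is by successive interval restriction, using that $tP^{j-\theta}\ll1$ and the tail sums are small compared with the spacing $b_jP^{\theta-j}$. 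I would take $i$ as above to recover the same exponent.
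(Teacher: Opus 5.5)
Your main route does not work, and the ``danger'' you flag is exactly what happens. For every $S$ with $1\notin S$, the projection $\pi_S\mathcal{B}$ is the \emph{whole} box $\prod_{j\in S}[-c_jtP^{\gamma j},c_jtP^{\gamma j}]$ once $P$ is large. Indeed, given $(y_j)_{j\in S}$ in that box, put $x_j=0$ for $j\notin S\cup\{1\}$ and $x_1=-b_1^{-1}\sum_{j\in S}b_jP^{1-j}y_j$; then $\mathcal{H}=0$ and $|x_1|\ll tP^{2\gamma-1}=o(tP^{\gamma})$. So the factor $\min(1,P^{i(1-\gamma)-\theta})$ you attach to $S=\{i,\ldots,m\}$ is not there. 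The quantity $\min(tP^{\gamma i},tP^{i-\theta})$ is the length of a fibre of $\mathcal{B}$ with all other coordinates fixed, not the width of a projection in which $x_1,\ldots,x_{i-1}$ are free. (Also, Davenport's lemma needs all coordinate projections, and relabelling coordinates does not let you keep only a flag.) Already $S=\{2,\ldots,m\}$ contributes $\asymp t^{m-1}P^{\frac{\gamma}{2}m(m+1)-\gamma}$. In the case the paper actually uses ($\gamma=1/n$ with $n\le\lfloor\theta\rfloor$, so $\Upsilon=(n-1)/2$), the target $t^mP^{\frac{\gamma}{2}m(m+1)-\theta+\frac{n-1}{2}}$ is smaller than this by the positive power $P^{\theta-\frac{n-1}{2}-\frac{1}{n}}$ for fixed $t$. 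The smallness of $\mathcal{L}(P)$ is arithmetic, not volumetric. $\mathcal{B}$ is a slab of width $\asymp tP^{1-\theta}$ with normal nearly $e_1$, and integrality pins $x_1$ (then $x_2,\ldots,x_{s_0}$) to $O(t)$ values; projecting along $e_1$ throws exactly this away.

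Your fallback fails in the same regime. With $i=s_0+1=n<\theta$, an interval of length $\ll 1+tP^{i-\theta}$ still gives $O(1)$ choices. So there is no saving at $x_i$, and you only get $\ll t^mP^{\sum_{j>n}\gamma j}$, short of the claim by $P^{\theta-n}$. If you instead take $i=r$ (which is $\ge\theta$ in case 1), the claim that $x_1,\ldots,x_{r-1}$ contribute $O(1)$ is false for $s_0<j<r$. There the still-free $x_{j+1},\ldots,x_r$ move $\mathcal{H}$ by up to $\asymp tP^{\theta-(j+1)(1-\gamma)}$, which exceeds the spacing $\asymp P^{\theta-j}$ of the $x_j$-term by the factor $tP^{\gamma(j+1)-1}\gg 1$.

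The missing idea is to fibre upward from $x_1$, as the paper does. Conditionally on $x_1,\ldots,x_{j-1}$, and bounding $x_{j+1},\ldots,x_m$ only by the box, $x_j$ lies in an interval of length $\ll t(P^{\gamma(j+1)-1}+P^{j-\theta})$. This gives $\ll t$ choices for $j\le s_0$, $\ll tP^{\gamma(j+1)-1}$ for $s_0<j<r$, $\ll tP^{r-\theta}$ for $j=r$, and the trivial $\ll tP^{\gamma j}$ for $j>r$. All of these are $\gg 1$ in case 1, so nothing is lost to rounding. Your exponent still came out right because of the telescoping identity $\sum_{j=s_0+1}^{r-1}(\gamma(j+1)-1)+(r-\theta)=\sum_{j=s_0+2}^{r}\gamma j+(s_0+1-\theta)$. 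In other words, your fictitious single saving at $x_{s_0+1}$ equals the genuine savings $P^{1-\gamma}$ at each $s_0<j<r$ plus the saving at $x_r$.
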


\begin{proof} The proof is very similar to the proof of Lemma 1 in \cite{ArkhZhit}. If $|\mathcal{H}| \leq t$, then we must have 
\[ b_1P^{\theta-1}x_1+\cdots+b_mP^{\theta-m}x_m = \eta t \]
for some $|\eta| \leq 1$. 
As a result, one has 
\[ x_1 = b_1^{-1}(-b_2x_2P^{-1}-\cdots-b_mx_mP^{1-m}+t\eta P^{1-\theta}) = b_1^{-1} \Big( t\eta P^{1-\theta} - \sum_{i=2}^m b_ix_iP^{-i+1} \Big). \]
Since $|x_i| \ll tP^{\gamma i}$ for $i = 1, \ldots, m$, we thus have 
\[ |x_1| \ll t|b_1|^{-1} \Big( P^{1-\theta} + \sum_{i=2}^m |b_i|P^{1+(\gamma-1)i} \Big) \ll t(P^{2\gamma-1}+P^{1-\theta}). \]
Since $\theta > 1$ and $2\gamma -1 \leq 0$, the right hand side above is $\ll t$, and so for large $P$, we deduce that $x_1$ can take on at most $\ll t$ many values. 

Now, choose an integer $i > 1$. Let 
\[ A_i = -b_i^{-1}\sum_{j = 1}^{i-1} b_jP^{\theta-j}x_j. \]
Then by the fact that $\mathcal{H} = \eta t$ we must have 
\[ x_i - A_i = -b_i^{-1}\sum_{j=i+1}^m b_jP^{i-j}x_j +t\eta b_i^{-1}P^{i-\theta}, \]
so that 
\[ |x_i - A_i| \ll b_i^{-1}\Big( tP^{i-\theta} + \sum_{j=i+1}^m |b_j||x_j|P^{i-j} \Big) \ll tP^{i-\theta} + t\sum_{j=i+1}^m P^{i-j(\gamma-1)} \ll t(P^{i-\theta}+P^{\gamma(i+1)-1}). \]
Fix the choices of $x_1, \ldots, x_{i-1}$, so that $A_i$ is fixed. If $i \leq (1-\gamma)^{-1}\theta -1$, then $i- \theta \leq \gamma(i+1)-1$, so $|x_i -A_i| \ll tP^{\gamma(i+1)-1}$. If $\gamma(i+1)-1 \leq 0$, which is to say if $i \leq \gamma^{-1}-1$, there are $\ll t$ many values that $x_i$ can take once all previous variables are fixed. Otherwise, there are $\ll tP^{\gamma(i+1)-1}$ many values that $x_i$ can take. Instead, if $i > (1-\gamma)^{-1}\theta-1$, then $|x_i-A_i| \ll tP^{i-\theta} $, and therefore the number of values that $x_i$ can take is $\ll tP^{i-\theta}$. However, we also have the trivial estimate from assumption that $x_i$ can take at most $\ll tP^{\gamma i}$ many values, which is a better estimate than $\ll tP^{i-\theta}$ as long as $\gamma i \leq i-\theta$, which is to say when $\theta(1-\gamma)^{-1} \leq i$. 

Let $s_0 = \lfloor \gamma^{-1}-1\rfloor $. Since $m \geq \theta (1-\gamma)^{-1}$, we have 
\[ (1-\gamma)^{-1}\theta-1 < r \leq (1-\gamma)^{-1}\theta \leq m.\] 
We split into two cases depending on whether 
$s_0+1 < r$, or $s_0 +1 \geq r$. 
If the first inequality holds, then $s_0+1 \leq r-1$, so there is at least one integer $i \in [s_0+1,r-1]$. 
The total number of solutions is hence $\ll t^m P^{\upsilon}$, where 
\[ \upsilon = \sum_{i = s_0+1 }^{r-1} (\gamma(i+1)-1)+\sum_{i = r+1}^m i\gamma + r-\theta = \frac{\gamma}{2}( m(m+1)- s_0(s_0+1))+(s_0+1)(1-\gamma)- \theta. \]
(The sum over $r+1 \leq i \leq m$ is considered to have no summands if $m=r$). Putting together the above gives us the required estimate in the first case. 

If $s_0 +1 \geq r$ instead, then the total number of solutions is $\ll t^mP^{\upsilon}$, where 
\[ \upsilon = \sum_{i = r+1}^m i\gamma + r-\theta = \frac{\gamma}{2}( m(m+1)- r(r+1))+r- \theta. \]
\end{proof}

\begin{remark} \label{rem:1}
Suppose $\gamma = n^{-1}$ for some integer $2 \leq n \leq \lfloor \theta \rfloor$. Then 
\[ s_0 + 1 = \lfloor \gamma^{-1}-1 \rfloor+1 = n < \frac{\theta}{1-\frac{1}{n}}-1 < r, \]
so the first case of Lemma \ref{lem:arkzhit} applies, by which we conclude that 
\[ \mathcal{L}(P) \ll t^m P^{\frac{m(m+1)}{2k}+\frac{n-1}{2}-\theta}. \]
\end{remark}

The next lemma is similar to Lemma 3.2 in \cite{Poulias1} and Lemma 2.1 in \cite{Watt}. We start with some definitions. Let $1 \leq h \leq r$ and $n \in \mathbb{N} \cup \{0\}$. let $V_{r,h}^n(I_1,I_2;\delta,m)$ be the number of solutions to the system 
\begin{equation} \label{eq:append}
\begin{aligned}
& |x_1^{\theta}+\cdots+x_r^{\theta}-x_{r+1}^{\theta}-\cdots-x_{2r}^{\theta}| < \delta, 
\\ 
& \sum_{i=1}^r (x_i^n-x_{i+r}^n) = m, 
\\ 
& \sum_{i=1}^r (x_i^j-x_{i+r}^j) = 0
\quad \quad (1 \leq j \leq n-1),
\end{aligned}
\end{equation}
where $x_1, \ldots,x_h, x_{r+1}, \ldots,x_{r+h} \in I_1$ and $x_i \in I_2$ for all other $i$. We write 
\[ V_{r,h}^n(I_1,I_2;\delta,0) = V_{r,h}^n(I_1,I_2;\delta). \] If $I_1 = I_2 = I$, then we write $V_{r,h}^n(I_1,I_2;\delta,m) = V_r^n(I;\delta,m)$ and $V_r^n(I;\delta,0) = V_r^n(I;\delta)$, since $h$ is irrelevant. Next, let $ \mathcal{S} \subseteq \mathbb{Z}^{2h}$ be any finite collection of lattice points. Let $V_{r,h}^n(\mathcal{S}, I_2,;\delta,m)$ be the number of solutions to \eqref{eq:append} with 
\[(x_1,\ldots, x_h, x_{r+1},x_{r+2},\ldots,x_{r+h}) \in \mathcal{S}\] 
and $x_i \in I_2$ for all other $i$. We again write $V_{r,h}^n(\mathcal{S}, I_2,;\delta,0) = V_{r,h}^n(\mathcal{S}, I_2,;\delta)$. 
For $\alpha \in \mathbb{R}$ and $\boldsymbol{\beta}_n = (\beta_1,\ldots,\beta_n) \in \mathbb{R}^n$, define 
\[ H_i(\alpha, \boldsymbol{\beta}_n) = H(\alpha, \boldsymbol{\beta}_n; I_i) = \sum_{x \in I_i} e(\alpha x^{\theta} +\beta_1x+\cdots+\beta_nx^n ). \]
For $1 \leq j \leq n$ and $1 \leq h \leq r$, put 
\[ \sigma_{h,j}(\mathbf{x}) = \sum_{i=1}^h (x_i^j-x_{r+i}^j) \quad \text{ and } \quad \boldsymbol{\sigma}_{h,n} = (\sigma_{h,1}(\mathbf{x}), \ldots, \sigma_{h,n}(\mathbf{x})), \]
and define 
\[ H_{\mathcal{S}}(\alpha, \boldsymbol{\beta}_n) = \sum_{(x_1,\ldots,x_h,x_{r+1},\ldots,x_{r+h}) \in \mathcal{S}} \ e\Big(\alpha\sum_{i=1}^h (x_i^{\theta}-x_{r+i}^{\theta})+\boldsymbol{\beta}_n\cdot \boldsymbol{\sigma}_{h,n} \Big). \]

\begin{lemma} \label{lem:poulcount}
Define $\Delta = (2\delta)^{-1}$. Then for any $1\leq h \leq r$ and $n \in \mathbb{N}$ one has 
\[ V_{r,h}^n(\mathcal{S}, I_2,;\delta,m) \ll \delta \int_{-\Delta}^{\Delta} \int_0^1 |H_{\mathcal{S}}(\alpha,\boldsymbol{\beta}_n)H_2(\alpha,\boldsymbol{\beta}_n)^{2r-2h}| \d \boldsymbol{\beta}_n \d \alpha. \]
In particular, one has 
\[ V_{r,h}^n(I_1, I_2; \delta,m) \ll \delta \int_{-\Delta}^{\Delta} \int_0^1 |H_1(\alpha,\boldsymbol{\beta}_n)^{2h} H_2(\alpha,\boldsymbol{\beta}_n)^{2r-2h}| \d \boldsymbol{\beta}_n \d \alpha. \]
Moreover, it also holds that 
\[ V_{r,h}^n(I_1, I_2; \delta) \asymp \delta \int_{-\Delta}^{\Delta} \int_0^1 |H_1(\alpha,\boldsymbol{\beta}_n)^{2h} H_2(\alpha,\boldsymbol{\beta}_n)^{2r-2h}| \d \boldsymbol{\beta}_n \d \alpha. \]
All implicit constants are independent of $I_1$, $I_2$, $\mathcal{S}$, $\theta$ and $\delta$. 
\end{lemma}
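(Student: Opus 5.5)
The plan is the standard orthogonality argument with a Fejér-type kernel. Detect the polynomial equations exactly by integrating over $\boldsymbol{\beta}_n\in[0,1)^n$, using $\int_0^1 e(\beta u)\d\beta = \mathbf{1}_{u=0}$ for integers $u$. Detect the inequality $|\cdot|<\delta$ by a nonnegative kernel. Concretely, let
\[ K(\alpha) = \Big(\frac{\sin(\pi\alpha/\Delta)}{\pi\alpha/\Delta}\Big)^2 \Delta^{-1}\cdot c ,\]
or more conveniently its Fourier pair. The kernel $\widehat{\Lambda}(u)=\delta^{-1}\max(0,1-|u|/\delta)$ has Fourier transform
\[ \int_{-\infty}^{\infty}\widehat{\Lambda}(u)e(-\alpha u)\d u = \Big(\frac{\sin \pi\alpha\delta}{\pi\alpha\delta}\Big)^2 \geq 0 .\]
We instead use the version with compact support in $\alpha$. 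Define $\psi(\alpha)=\Delta^{-1}\max(0,1-|\alpha|/\Delta)$ on $[-\Delta,\Delta]$. Then
\[ \widehat{\psi}(u)=\Big(\frac{\sin\pi\Delta u}{\pi\Delta u}\Big)^2\ge 0, \]
and for $|u|<\delta=(2\Delta)^{-1}$ one has $\pi\Delta|u|\le\pi/2$. Hence $\widehat\psi(u)\ge (2/\pi)^2$, i.e. $\mathbf{1}_{|u|<\delta}\ll \widehat\psi(u)$.

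First I treat the general case with $m$ and $\mathcal{S}$. Writing $u$ for the $\theta$-form in \eqref{eq:append}, I bound
\[ V_{r,h}^n(\mathcal{S},I_2;\delta,m)\ll \sum_{\mathbf{x}} \widehat\psi(u(\mathbf{x}))\int_{[0,1)^n} e\big(\boldsymbol{\beta}_n\cdot(\boldsymbol{\sigma}_{r,n}(\mathbf{x})-(0,\dots,0,m))\big)\d\boldsymbol{\beta}_n .\]
The sum runs over all admissible $\mathbf{x}$, and this is valid since every summand is $\geq 0$. Expanding $\widehat\psi(u)=\int\psi(\alpha)e(\alpha u)\d\alpha$ and interchanging the finite sum with the integrals, the inner sum factors as
\[ H_{\mathcal{S}}(\alpha,\boldsymbol{\beta}_n)\,H_2(\alpha,\boldsymbol{\beta}_n)^{r-h}\,\overline{H_2(\alpha,\boldsymbol{\beta}_n)}^{\,r-h}, \]
times $e(-\beta_n m)$. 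Taking absolute values inside and using $0\le\psi\le\Delta^{-1}=2\delta$ on $[-\Delta,\Delta]$ yields the first claimed bound. The second follows by choosing $\mathcal{S}=I_1^{2h}$, for which $H_{\mathcal{S}}=|H_1|^{2h}$ because the variables split into conjugate pairs.

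For the reverse inequality when $m=0$, I run the argument backwards with a different kernel. The integrand is now $|H_1|^{2h}|H_2|^{2r-2h}$, which is nonnegative, so
\[ \delta\int_{-\Delta}^{\Delta}\int_{[0,1)^n}|H_1|^{2h}|H_2|^{2r-2h}\d\boldsymbol{\beta}_n\d\alpha \le 2\delta\int\phi(\alpha)\int|H_1|^{2h}|H_2|^{2r-2h}\d\boldsymbol{\beta}_n\d\alpha .\]
Here $\phi$ is a nonnegative function with $\phi\ge\tfrac12$ on $[-\Delta,\Delta]$ and with $\widehat\phi$ supported in $|u|<\delta$. A concrete choice is $\phi(\alpha)=c\big(\sin(\pi\alpha\delta)/(\pi\alpha\delta)\big)^2$: on $|\alpha|\le\Delta$ we have $\pi|\alpha|\delta\le\pi/2$, and $\widehat\phi(u)=c\,\delta^{-1}\max(0,1-|u|/\delta)$ is supported in $|u|\le\delta$. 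Expanding the moduli and integrating over $\boldsymbol{\beta}_n$ restricts to solutions of the equations, leaving $2\delta\sum_{\mathbf{x}}\widehat\phi(u(\mathbf{x}))$. Since $\widehat\phi\le c\,\delta^{-1}$ and vanishes unless $|u|<\delta$ (boundary cases have measure zero in weight, as $\widehat\phi=0$ at $|u|=\delta$), this is $\ll V^n_{r,h}(I_1,I_2;\delta)$.

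The only subtle point is this lower bound, where the integrand must be nonnegative. That is why it is stated only for $m=0$ and for the symmetric configuration, not for general $\mathcal{S}$. All constants come from the kernels, so they are absolute and independent of $I_1$, $I_2$, $\mathcal{S}$, $\theta$ and $\delta$.
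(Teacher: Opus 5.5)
Your proposal is correct and follows essentially the same route as the paper. You detect the equations by orthogonality in $\boldsymbol{\beta}_n$ and the inequality with the Fejér/triangle-function Fourier pair: the compactly supported triangle function for the upper bound, and $\operatorname{sinc}^2(\delta\alpha)$ minorizing the indicator of $[-\Delta,\Delta]$ for the lower bound. Your kernels $\psi$ and $\phi$ are just rescalings of the paper's $\Lambda(2\delta\alpha)$ and $K_0(\delta\alpha)$.
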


\begin{proof} The proof is an adaptation of the proof of Lemma 3.2 from \cite{Poulias1}. We start by proving the upper bound on $V_{r,h}(\mathcal{S}, I_2; \delta) $. Define the auxiliary functions 
\[ K(\alpha) = \operatorname{sinc}^2(\alpha) \quad \text{ and } \quad \Lambda(x) = \max\{0, 1-|x|\}, \] where $\operatorname{sinc}(\alpha) = 1 $ if $\alpha =0$ and $\operatorname{sinc}(\alpha) = (\pi\alpha)^{-1}\sin(\pi\alpha)$
otherwise. We then have the identities
\[ K_0(\xi) = \int_{\mathbb{R}} e(-x\xi)\Lambda(x)\d x \quad \text{ and } \quad \Lambda(x) = \int_{\mathbb{R}} e(x\xi)K_0(\xi)\d\xi, \]
see Lemma 20.1 in \cite{DavBook}. It holds that $\pi^2K_0(\alpha)/4 \geq 1$ for $|\alpha|\leq 1/2$. For $\mathbf{x} = (x_1,\ldots,x_{2r})$, put 
\[ \sigma_{r,\theta}(\mathbf{x}) = \sum_{i=1}^r \big(x_i^{\theta}-x_{i+r}^{\theta}\big) \quad \text{ and } \quad \xi_0(\mathbf{x}) = (2\delta)^{-1} \sigma_{r,\theta}(\mathbf{x}). \]
Note that for $-\Delta \leq \alpha \leq \Delta$, one has $|\delta\alpha|\leq 1/2$. Note also that $K_0$ is even. Recall that $\boldsymbol{\beta}_n = (\beta_1,\ldots,\beta_n)$. From the estimate on $K_0$, we deduce by orthogonality that 
\begin{align*} 
V_{r,h}(\mathcal{S},I_2) & \leq \frac{\pi^2}{4} \sum_{\mathbf{x}} K_0(\xi_0(\mathbf{x})) \int_{0}^1 e(\boldsymbol{\sigma}_{r,n}\cdot\boldsymbol{\beta}_n-m\beta_n)\d\boldsymbol{\beta}_n 
\\ & = \frac{\pi^2}{4} \sum_{\mathbf{x}} \int_{-\infty}^{\infty} e(u\xi_0(\mathbf{x})) \Lambda(u)\d u \int_{0}^1 e(\boldsymbol{\sigma}_{r,n}\cdot\boldsymbol{\beta}_n-m\beta_n)\d\boldsymbol{\beta}_n 
\\
& = \frac{\pi^2\delta}{2} \sum_{\mathbf{x}} \int_{\infty}^{\infty} e(\alpha\sigma_{r,\theta}(\mathbf{x})) \Lambda(2\delta\alpha)\d \alpha \int_{0}^1 e(\boldsymbol{\sigma}_{r,n}\cdot\boldsymbol{\beta}_n-m\beta_n)\d\boldsymbol{\beta}_n, 
\end{align*}
where the sum is over all $\mathbf{x}=(x_1,\ldots,x_{2r})$ satisfying $(x_1,\ldots,x_h,x_{r+1},\ldots,x_{r+h}) \in \mathcal{S}$ and $x_i \in I_2$ for all remaining $i$. Note that 
\begin{align*} \sum_{\mathbf{x}} \int_{-\infty}^{\infty} e(\alpha\sigma_{r,\theta}(\mathbf{x})) \Lambda(2\delta\alpha)\d \alpha & \int_{0}^1 e(\boldsymbol{\sigma}_{r,n}\cdot\boldsymbol{\beta}_n-m\beta_n)\d\boldsymbol{\beta}_n 
\\ & = \int_{-\infty}^{\infty} \int_0^1 H_{\mathcal{S}}(\alpha,\boldsymbol{\beta}_n)|H_2(\alpha,\boldsymbol{\beta}_n)|^{2r-2h} e(-m\beta_n) \Lambda(2\delta\alpha) \d \boldsymbol{\beta}_n \d \alpha. 
\end{align*}
Next, $\Lambda(2\delta\alpha) = 0$ for $|\alpha| > (2\delta)^{-1} = \Delta$, and $0 \leq \Lambda(2\delta\alpha) \leq 1$ for all $\alpha$. Hence, we use the Triangle Inequality to deduce that 
\begin{align*} V_{r,h}(\mathcal{S},I_2) & \leq \frac{\pi^2\delta}{2} \int_{-\infty}^{\infty} \int_0^1 |H_{\mathcal{S}}(\alpha,\boldsymbol{\beta}_n)H_2(\alpha,\boldsymbol{\beta}_n)^{2r-2h}| \Lambda(2\delta\alpha) \d \boldsymbol{\beta}_n \d \alpha 
\\
& \leq \frac{\pi^2\delta}{2} \int_{-\Delta}^{\Delta} \int_0^1 |H_{\mathcal{S}}(\alpha,\boldsymbol{\beta}_n)H_2(\alpha,\boldsymbol{\beta}_n)^{2r-2h}| \d \boldsymbol{\beta}_n \d \alpha, 
\end{align*}
proving the first inequality. Applying this inequality with $\mathcal{S} = I_1^{2h}$ derives the second upper bound, and letting $m=0$ also derives the upper bound in the third estimate. Thus, it only remains to prove the lower bound in the third estimate 

Note that $\pi^2K_0(\delta\alpha)/4 \geq 1$ for $|\delta\alpha| \leq 1/2$, which is when $|\alpha| \leq \Delta^{-1}$. We therefore have 
\begin{align*} 
\delta \int_{-\Delta}^{\Delta} \int_0^1 |H_1(\alpha,\boldsymbol{\beta}_n)^{2h} & H_2(\alpha,\boldsymbol{\beta}_n)^{2r-2h}| \d \boldsymbol{\beta}_n \d \alpha 
\\ & \leq \frac{\pi^2\delta}{4} \int_{-\Delta}^{\Delta} \int_0^1 |H_1(\alpha,\boldsymbol{\beta}_n)^{2h}H_2(\alpha,\boldsymbol{\beta}_n)^{2r-2h}| K_0(\delta\alpha) \d \boldsymbol{\beta}_n \d \alpha 
\\ 
& \leq \frac{\pi^2\delta}{4} \int_{-\infty}^{\infty} \int_0^1 |H_1(\alpha,\boldsymbol{\beta}_n)^{2h}H_2(\alpha,\boldsymbol{\beta}_n)^{2r-2h}| K_0(\delta\alpha) \d \boldsymbol{\beta}_n \d \alpha
\\ 
& = \frac{\pi^2\delta}{4} \sum_{\mathbf{x}} \int_{-\infty}^{\infty} e(\alpha\sigma_{r,\theta}(\mathbf{x}))K_0(\delta\alpha)\d\alpha \int_0^1 e(\boldsymbol{\sigma}_{r,n}\cdot\boldsymbol{\beta}_n)\d\boldsymbol{\beta}_n
\\ 
& = \frac{\pi^2}{4} \sum_{\mathbf{x}} \Lambda(2\xi_0(\mathbf{x})) \int_0^1 e(\boldsymbol{\sigma}_{r,n}\cdot\boldsymbol{\beta}_n)\d\boldsymbol{\beta}_n,
\end{align*}
where the sum is over all $\mathbf{x}$ with $x_i \in I_1$ for $i = 1,\ldots,h, r+1,\ldots,r+h$ and $x_i \in I_2$ for the remaining $i$. Finally, using orthogonality and the inequality $\Lambda(x) \leq \chi_{(-1,1)}(x)$, where $\chi_S$ denotes the characteristic function of the set $S$, we have 
\[ \sum_{\mathbf{x}} \Lambda(2\xi_0(\mathbf{x})) \int_0^1 e(\boldsymbol{\sigma}_{r,n}\cdot\boldsymbol{\beta}_n)\d\boldsymbol{\beta}_n \leq \sum_{\mathbf{x}} \chi_{(-1,1)}(2\xi_0(\mathbf{x})) \int_0^1 e(\boldsymbol{\sigma}_{r,n}\cdot\boldsymbol{\beta}_n)\d\boldsymbol{\beta}_n = V_{r,h}^n(I_1,I_2;\delta). \]
This proves the required lower bound in the third estimate. 
\end{proof} 

Next, we discuss the existence of admissible Diophantine pairs. 
\begin{lemma} \label{lem:vinodiopair}
Let $k \geq 3$, and let $m \geq k(k-1)/2$. Then $(m,k)$ is admissible. 
\end{lemma}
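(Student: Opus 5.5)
We have to show that, for $k\ge 3$ and $m \geq k(k-1)/2$, the number of integer solutions to \eqref{eq:diopairsystem} with $|x_i|\le C_0Y$ is $\ll Y^{2m-\frac12(k-1)+\epsilon}$. The plan is to detect the inequalities \eqref{eq:diopairsystem} by equations, bound the resulting counts with the Main Conjecture in Vinogradov's mean value theorem (as proven in \cite{BDG} or \cite{WooNEC}), and then count the admissible right-hand sides.

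\emph{Reduction to shifted Vinogradov systems.} A tuple $(x_1,\ldots,x_{2m})$ in $\mathcal{C}_{m,k}(Y)$ satisfies
\[ \sum_{j=1}^m (x_j^i - x_{m+j}^i) = h_i \quad (1\le i\le k-1) \]
for some integers $h_i$ with $|h_i|\le C_iY^{i(1-1/k)}$. Write $J(\mathbf h)$ for the number of solutions with $|x_j|\le C_0Y$ and a prescribed $\mathbf h=(h_1,\ldots,h_{k-1})$. By orthogonality,
\[ J(\mathbf h)=\int_{[0,1)^{k-1}} |f(\boldsymbol\beta)|^{2m} e(-\mathbf h\cdot\boldsymbol\beta)\d\boldsymbol\beta \le \int_{[0,1)^{k-1}} |f(\boldsymbol\beta)|^{2m}\d\boldsymbol\beta = J(\mathbf 0), \]
where $f(\boldsymbol\beta)=\sum_{|x|\le C_0Y} e(\beta_1x+\cdots+\beta_{k-1}x^{k-1})$. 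Translating the interval $[-C_0Y,C_0Y]$ to $[1,2C_0Y+1]$ does not change $J(\mathbf 0)$, because the Vinogradov system is translation invariant (binomial theorem). So $J(\mathbf 0)$ is the Vinogradov mean value $J_{m,k-1}(\asymp Y)$ of degree $k-1$.

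\emph{Vinogradov input.} The degree here is $d=k-1$, and the critical exponent is $d(d+1)/2=k(k-1)/2\le m$. The Main Conjecture therefore gives
\[ J_{m,k-1}(Y)\ll_\epsilon Y^{2m-\frac{k(k-1)}{2}+\epsilon}. \]

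\emph{Counting right-hand sides.} The number of admissible $\mathbf h$ is
\[ \ll \prod_{i=1}^{k-1} Y^{i(1-1/k)} = Y^{(1-1/k)\frac{k(k-1)}{2}} = Y^{\frac{(k-1)^2}{2}}. \]
Summing $J(\mathbf h)\le J(\mathbf 0)$ over these $\mathbf h$ gives
\[ |\mathcal{C}_{m,k}(Y)|\ll Y^{2m-\frac{k(k-1)}{2}+\frac{(k-1)^2}{2}+\epsilon} = Y^{2m-\frac{k-1}{2}+\epsilon}, \]
which is exactly the bound required by Definition \ref{def:diopair}.

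The only step needing care is the uniform bound $J(\mathbf h)\le J(\mathbf 0)$, and it is immediate from the positivity of $|f|^{2m}$. There is no real obstacle: the exponents match exactly, since $(1-1/k)\,k(k-1)/2$ differs from $k(k-1)/2$ by exactly $(k-1)/2$.
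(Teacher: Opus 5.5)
Your proposal is correct and follows the paper's proof essentially verbatim. As in the paper, you bound each shifted count $J(\mathbf h)$ by $J(\mathbf 0)$ via orthogonality, apply the Main Conjecture in Vinogradov's mean value theorem in degree $k-1$ (critical exponent $k(k-1)/2$), and multiply by the $\ll Y^{(k-1)^2/2}$ admissible choices of $\mathbf h$; your remarks on translating the range $|x|\le C_0Y$ to positive integers and on the degree being $k-1$ spell out points the paper leaves implicit.
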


\begin{proof} Note that the system \eqref{eq:diopairsystem} is equivalent to 
\begin{align*} 
x_1^{i}+\cdots+x_{m}^i - x_{m+1}^i-\cdots-x_{2m}^i = h_i \quad \text{ for } \quad 1 \leq i \leq k-1,
\end{align*}
where $|h_i| \leq C_iY^{i(1-1/k)}$. For fixed $\mathbf{h}=(h_1,\ldots,h_{k-1})$, let $J^0_{m,k}(Y,\mathbf{h})$ be the number of solutions to the above system with $|x_i| \leq C_0Y$. In the case $h_i = 0$ for all $i$ write $J^0_{m,k}(Y,\mathbf{h}) = J^0_{m,k}(Y)$. By orthogonality, the number of solutions to \eqref{eq:diopairsystem} is 
\[ |\mathcal{C}_{m,k}(Y)| =  \sum_{i=1}^{k-1} \, \sum_{|h_i| \leq C_iY^{i(1-1/k)}} J^0_{m,k}(Y,\mathbf{h}) \leq \sum_{i=1}^{k-1} \, \sum_{|h_i| \leq C_iY^{i(1-1/k)}} J^0_{m,k}(Y). \]
By the Main Conjecture of Vinogradov's Mean Value Theorem (see Theorem 1.1 in \cite{BDG} or Corollary 1.3 in \cite{WooNEC}), one has $J^0_{m,k}(Y) \ll Y^{2m -k(k-1)/2+\epsilon}$ for $m \geq k(k-1)/2$. Thus, 
\[ \sum_{i=1}^{k-1} \, \sum_{|h_i| \leq C_iY^{i(1-1/k)}} J^0_{m,k}(Y) \ll Y^{2m-k(k-1)/2+\epsilon}Y^{k(k-1)(1-1/k) /2} \ll Y^{2m-\frac{1}{2}(k-1)+\epsilon}, \]
and so by definition $(m,k)$ is a Diophantine pair. 
\end{proof}

We note that the above lower bound on $m$ is not sharp. One can do better even in the case $k = 3$, where the above estimate would require $m \geq 3$. We prove this in slightly more generality than is required. 
\begin{lemma} \label{lem:conj2}
Let $0 < \gamma < 1/2$ and fix positive constants $C_0, C_1, C_2$. Let $\mathcal{S}_{\gamma}(X)$ be the set of integer solutions $(y_1,y_2,y_3,y_4)$ to the system 
\begin{align*} 
& |y_1^2-y_2^2+y_3^2-y_4^2| \leq C_2 (X^{1/2}+X^{2\gamma}),
\\
& |y_1-y_2+y_3-y_4| \leq C_1X^{\gamma}, 
\end{align*}
with $|y_i| \leq C_0X^{1/2}$. Then for every $\epsilon > 0$ one has $|\mathcal{S}_{\gamma}(X)| \ll_{\epsilon} (X^{1+\gamma}+X^{1/2+3\gamma}) X^{\epsilon}$. 
\end{lemma}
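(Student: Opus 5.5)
The plan is to linearise the system by a change of variables and then use the divisor bound. Fix $(a,b)$ with
\[ a = y_1-y_2+y_3-y_4, \qquad b = y_1^2-y_2^2+y_3^2-y_4^2, \]
so that $|a| \leq C_1X^{\gamma}$ and $|b| \leq C_2(X^{1/2}+X^{2\gamma})$. There are $\ll X^{\gamma}(X^{1/2}+X^{2\gamma})$ such pairs. Introduce
\[ p = y_1-y_2,\quad q = y_4-y_3,\quad s = y_1+y_2,\quad t = y_3+y_4 . \]
The map $(y_1,\dots,y_4)\mapsto(p,q,s,t)$ is injective, since $y_1=(s+p)/2$ and so on. Also $|p|,|q|,|s|,|t| \ll X^{1/2}$. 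The linear equation becomes $p-q=a$, and the quadratic one factors as $y_1^2-y_2^2 = ps$ and $y_3^2-y_4^2 = -qt$. Thus $b = ps - qt$.

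Substitute $q = p-a$ and put $w = s-t$, again with $|w| \ll X^{1/2}$. The quadratic equation becomes
\[ pw + at = b . \]
It therefore suffices to count tuples $(a,b,t,p,w)$ satisfying this relation in the stated ranges, because $(a,p,w,t)$ determines $(p,q,s,t)$ and hence $\mathbf{y}$.

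The main case is $b - at \neq 0$. Fix $a$, $b$ and $t$; the number of such triples is $\ll X^{\gamma}(X^{1/2}+X^{2\gamma})X^{1/2}$. Then $pw = b-at$ is a fixed nonzero integer of size $\ll X^{O(1)}$, so by the divisor bound there are $\ll X^{\epsilon}$ choices of $(p,w)$. This case contributes
\[ \ll X^{\gamma}(X^{1/2}+X^{2\gamma})X^{1/2}X^{\epsilon} = (X^{1+\gamma}+X^{1/2+3\gamma})X^{\epsilon}. \]

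The degenerate case is $b = at$, so $pw=0$. Here $b$ is determined by $(a,t)$, giving $\ll X^{\gamma+1/2}$ choices for $(a,t)$. Then either $p=0$ with $w$ free, or $w=0$ with $p$ free, which gives $\ll X^{1/2}$ further choices. The total is $\ll X^{1+\gamma}$, which is acceptable. This includes the subcase $a=0$, where simply $b=pw$.

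The only points that need care are, first, the injectivity and parity bookkeeping of the change of variables, which is harmless since we only need an upper bound. Second, one should isolate the zero case of the divisor argument, because there the divisor bound is not available. Neither step is a serious obstacle: the key idea is the substitution $q=p-a$, $w=s-t$, which makes the quadratic form bilinear in $(p,w)$ up to the linear term $at$.
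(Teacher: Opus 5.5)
Your proof is correct and follows essentially the same route as the paper: both fix $E_1=y_1-y_2+y_3-y_4$, $E_2$, and one further variable of size $X^{1/2}$ (the paper uses $y_3$, you use $t=y_3+y_4$). Both then write the quadratic condition as a product of two linear forms equal to a fixed quantity — the paper's identity $2(E_1+y_2-y_3)(y_1-y_2)=E_2+E_1^2-2E_1y_3$ is equivalent to your $pw=b-at$ — and apply the divisor bound when that quantity is nonzero and the vanishing of one factor when it is zero, with your symmetric substitution only making the bookkeeping slightly cleaner.
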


Letting $\gamma = 1/3$ in the above allows us to deduce that $|\mathcal{S}_{\gamma}(X)| \ll_{\epsilon} X^{3/2+\epsilon}$, and so $(2,3)$ is an admissible Diophantine pair. 

\begin{proof} Fix $\epsilon > 0$. Let $E_1 = y_1-y_2+y_3-y_4$ and $E_2 = y_1^2-y_2^2+y_3^2-y_4^2$. It follows that
\begin{align*} 
E_2 = y_1^2-y_{2}^2 +y_3^3-(y_1-y_2+y_3-E_1)^2.
\end{align*}
Rearranging this equality, we deduce that 
\begin{align} \label{eq:product}
2(E_1+y_{2}-y_3)(y_1-y_{2})=E_2+E_1^2-2E_1y_3 
\end{align}
Note that $E_1 \ll X^{\gamma}$, $E_2 \ll X^{2\gamma}+X^{1/2}$ and $y_3 \ll X^{1/2}$. The number of possible choices of $(y_3,E_1,E_2)$, and hence $E_2+E_1^2-2E_1y_3$, is $\ll X^{1/2+\gamma}(X^{2\gamma}+X^{1/2})$. We now split into two cases, depending on whether $E_2+E_1^2-2E_1y_3$ is zero or non-zero. 

Suppose $E_2+E_1^2-2E_1y_3\neq 0$. By a divisor bound there are $\ll X^{\epsilon}$ possible values for $E_1+y_{2}-y_3$, and hence for $y_{2}$. Once $y_{2}, y_3, E_1$, and $E_2$ are determined, there is at most one possible choice for $y_1$, and the same is true for $y_{4}$. Putting these together, we indeed have $|\mathcal{S}_{\gamma}(X)| \ll X^{1/2+\gamma+\epsilon}(X^{2\gamma}+X^{1/2})$, as claimed. 

Now suppose $E_2+E_1^2-2E_1y_3 = 0$. Note that once $(y_3,E_1)$ are determined, there is at most one of $E_2$ for which this is true, and hence there are at most $\ll X^{1/2+\gamma}$ choices of $(y_1,E_1,E_2)$ for which this case applies. Using \eqref{eq:product}, it must be that either $y_1 = y_2$ or $y_2 = y_3-E_1$. Suppose first that $y_1=y_2$. Then there are at most $\ll X^{1/2}$ possible values of $(y_1,y_2)$. Also, we now have $y_4 = y_3-E_1$ and so $y_4$ is uniquely determined by $(y_3,E_1)$. Thus, $(y_1,\ldots,y_4,E_1,E_2)$ is determined by $(y_1,y_3,E_1)$ and the number of possible values of $(y_1,\ldots,y_4,E_1,E_2)$ satisfying the required bounds is $\ll X^{1+\gamma} $. If instead we have $y_2=y_3-E_1$, then $y_2$ is uniquely determined by $(y_3,E_1)$. Substituting this into the identity for $y_4$ we have $y_1 = y_4$. Hence there are at most $\ll X^{1/2}$ possible values for $(y_1,y_4)$, and therefore at most $\ll X^{1+\gamma}$ possible choices for $(y_1,\ldots,y_4,E_1,E_2)$ once again. In this case, it then follows that $|\mathcal{S}_{\gamma}(X)| \ll X^{1+\gamma}$. 
\end{proof}

\section{Mean Value Estimates} \label{sec:meanval}

The key to Theorem 3.4 in \cite{Poulias1} is to shorten the range of the variables while simultaneously achieving a sufficient power saving. Our approach to Theorems \ref{thm:main} and \ref{thm:general} extends this idea. We briefly outline the idea of the proof. By Taylor expansion and Lemma \ref{lem:poulcount}, the estimate \ref{eq:meanval} essentially reduces to the analysis of the system 
\[  \begin{cases}
      \Big|b_1 T^{\theta-1}h_1+\cdots+b_k T^{\theta-k}h_k| < 1,  \\
      \sum_{i=1}^{r} (z_i^j-z_{r+i}^j) = h_j \quad (1 \leq j \leq k),
    \end{cases} 
\] 
where $b_i$ is as defined at the beginning of Section \ref{sec:prelim}. The second line above can be handled using The Main Conjecture in Vinogradov's Mean Value Theorem, which would require $2r \geq k(k+1)$. The first line must be estimated with Lemma \ref{lem:arkzhit}. The value of $k$ in terms of $\theta$ may be reduced by shortening the ranges of the variables $z_i$, but this is done at the cost of a significant reduction of sharpness in the estimate given by Lemma \ref{lem:arkzhit}. To counteract these losses, we use an iterative process of compressing the range of the variables and extracting new information at each level, regaining the savings incrementally at each step. 

We start with a few definitions. We first decompose the interval $(X,2X]$ into essentially nested intervals of decreasing scales. Let $1 = \gamma_0 > \gamma_1 > \cdots > \gamma_n > 0$ for a fixed integer $n$. 
Given integers $q_1,\ldots,q_v$ with $0 \leq q_i \leq \lfloor X^{\gamma_{i-1}-\gamma_i} \rfloor$ for some $1 \leq v \leq n$, define the $n$-tuples 
\[ \mathbf{q}^{(v)} = (q_1,\ldots,q_v,0,\ldots,0).\]
For $1 \leq q \leq \lfloor X^{\gamma_{v-1}-\gamma_v} \rfloor$, we also write $\mathbf{q}^{(v)}|q = (q_1,\ldots, q_{v-1},q,0\ldots,0) $. Define 
\[ X_{\mathbf{q}^{(v)}} = X +\sum_{i=1}^v q_iX^{\gamma_i} = X +\sum_{i=1}^n q_iX^{\gamma_i}, \]
where $q_i = 0$ for $v+1 \leq i \leq n$. 
Given $\mathbf{q}^{(v)} = (q_1,\ldots,q_v,0,\ldots,0)$, define the intervals 
\[I(\mathbf{q}^{(v)}) = (X_{\mathbf{q}^{(v)}},X_{\mathbf{q}^{(v)}}+X^{\gamma_v}] = (X_{\mathbf{q}^{(v)}|{q_v}},X_{\mathbf{q}^{(v)}|(q_{v}+1)}].\] 
For $1 \leq v < n$ and $\mathbf{q}^{(v+1)}|q = (q_1,\ldots,q_v,q,0,\ldots,0)$, the intervals $I(\mathbf{q}^{(v+1)}|q)$ are pairwise disjoint for $q = 0, \ldots, \lfloor X^{\gamma_{h}-\gamma_{v+1}} \rfloor$, and one sees that 
\[ I({\mathbf{q}^{(v)}}) \subseteq \bigsqcup_{q=0}^{\lfloor X^{\gamma_{v}-\gamma_{v+1}} \rfloor} I({\mathbf{q}^{(v+1)}|q}). \]

We will treat the case $k = 3$ in Theorem \ref{thm:general} separately, as it serves as the base case in the general proof and also illustrates the main idea of the method. 
In the next lemma, let $\gamma_1 = 1/2$ and $\gamma_2 = 1/3$. Thus, the definitions of $\mathbf{q}^{(v)}$, $X_{\mathbf{q}^{(v)}}$ and $I(\mathbf{q}^{(v)})$ are as above for $v = 1, 2$. 

\begin{lemma} \label{lem:meancase2}
Let $\theta> 3$ and $\kappa \geq 1$. Let $m_3 \in \mathbb{N}$ such that $(m_3,3)$ is a Diophantine pair. There exists an interval $I_0 \subseteq (X,2X]$ of the form $I_0 = I(\mathbf{q}^{(2)}) $ such that 
\begin{align*} \int_{-\kappa}^{\kappa} |g(\alpha)|^{2r} \d\alpha 
& \ll \kappa X^{\frac{4r}{3}+\frac{2m_3}{3}-1+\epsilon})\int_{-1}^{1} \int_0^1 |H(\alpha,\boldsymbol{\beta}_2;I_0)|^{2r-2m_3} \d \boldsymbol{\beta}_2 \d \alpha.  
\end{align*}
\end{lemma}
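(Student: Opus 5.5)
The plan is to convert the left-hand side into a counting problem, localise the variables in two stages of Hölder, and then use the admissibility of $(m_3,3)$ to handle $m_3$ pairs of variables while leaving the other $2r-2m_3$ in a short interval $I_0$.

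\emph{Step 1: reduction to counting.} By the third estimate of Lemma \ref{lem:poulcount} with $\delta=(2\kappa)^{-1}$ and $\Delta=\kappa$, one has $\int_{-\kappa}^{\kappa}|g(\alpha)|^{2r}\d\alpha \ll \kappa V_r((X,2X];(2\kappa)^{-1})$. Since $\kappa\geq 1$, this is at most $\kappa V_r((X,2X];1)$, which counts $2r$-tuples in $(X,2X]$ with $|\sum_{i\le r}(x_i^\theta-x_{r+i}^\theta)|<1$. Equivalently I will work with $\int_{-1}^{1}|g|^{2r}$, which produces the $\alpha$-range $[-1,1]$ in the statement.

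\emph{Step 2: two-stage localisation.} First split $(X,2X]$ into the $\asymp X^{1/2}$ intervals $I(\mathbf{q}^{(1)})$ of length $X^{1/2}$, writing $g=\sum_{q_1}g_{q_1}$. Hölder gives $|g|^{2r}\ll X^{(2r-1)/2}\sum_{q_1}|g_{q_1}|^{2r}$. In each summand I write $|g_{q_1}|^{2r}=|g_{q_1}|^{2m_3}|g_{q_1}|^{2r-2m_3}$. I split only the second factor into the $\asymp X^{1/6}$ subintervals $I(\mathbf{q}^{(2)})$ of length $X^{1/3}$, and apply Hölder once more. I then choose $I_0=I(\mathbf{q}^{(2)})$ to maximise the resulting integral, which removes the sums over $q_1,q_2$ at the cost of their lengths.

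\emph{Step 3: count the long pairs.} Shift every variable by $T=X_{\mathbf{q}^{(2)}}$, writing $x=T+z$. Then $|z|\ll X^{1/2}$ for the $2m_3$ "long" variables and $|z|\ll X^{1/3}$ for the remaining ones. Taylor expansion gives
\[ \sum\pm x^\theta=\sum_{j\ge1} b_jT^{\theta-j}\sigma_j(\mathbf{z}). \]
I then introduce the moment conditions in $\sigma_1,\sigma_2$ through the $\boldsymbol{\beta}_2$-integration, via the first estimate of Lemma \ref{lem:poulcount} with $n=2$. The set $\mathcal{S}$ is taken to be those $2m_3$-tuples whose power sums satisfy $|\sigma_{m_3,i}|\ll Y^{2i/3}$ with $Y=X^{1/2}$, that is $|\sigma_{m_3,1}|\ll X^{1/3}$ and $|\sigma_{m_3,2}|\ll X^{2/3}$. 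Admissibility of $(m_3,3)$ gives $|\mathcal{S}|\ll Y^{2m_3-1+\epsilon}=X^{m_3-1/2+\epsilon}$. Bounding $|H_{\mathcal{S}}|\le|\mathcal{S}|$ trivially then leaves
\[ \int_{-1}^1\int_0^1|H(\alpha,\boldsymbol{\beta}_2;I_0)|^{2r-2m_3}\d\boldsymbol{\beta}_2\d\alpha. \]

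\emph{Main obstacle.} The hard step is justifying that in every contributing solution the long block does lie in $\mathcal{S}$. Concretely, from $|\sum b_jT^{\theta-j}\sigma_j|<1$ and the smallness of the short block, one must deduce $\sigma_1\ll X^{1/3}$ and $\sigma_2\ll X^{2/3}$ for the long block. I would attack this by a Lemma \ref{lem:arkzhit}-type successive elimination. Fix the short-block contributions and the total values $h_1,h_2$ encoded by $\boldsymbol{\beta}_2$. The leading term $b_1T^{\theta-1}\sigma_1$ must then cancel against terms of size $T^{\theta-j}X^{j/2}$, which forces $\sigma_1$ into a range of length $\ll X^{1/3}$ once the smaller contributions are absorbed. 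Any residual range would be handled by dyadic pigeonholing on $h_1,h_2$. The last task is the exponent bookkeeping. The Hölder losses $X^{(2r-1)/2}\cdot X^{1/2}\cdot X^{(2r-2m_3-1)/6}\cdot X^{1/6}$ combined with $X^{m_3-1/2}$ must collapse to the stated $X^{4r/3+2m_3/3-1+\epsilon}$. My naive tally overshoots by $X^{1/2}$, giving $X^{4r/3+2m_3/3-1/2}$. So I expect the extra saving to come from the $\sigma_1$ restriction on the long block, which cuts an $X^{1/2}$-range down to $X^{1/3}$, together with the normalisation of $\delta$. This is where I would check the argument most carefully.
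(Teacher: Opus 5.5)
There is a genuine gap, and it is exactly the factor $X^{1/2}$ by which your tally overshoots.

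\emph{The missing saving.} It cannot come from the $\sigma_1$-restriction on the long block. That restriction is one of the two constraints defining $\mathcal{S}$, and its saving is already inside the admissibility bound $|\mathcal{S}|\ll Y^{2m_3-1+\epsilon}=X^{m_3-1/2+\epsilon}$. Trivially $|\mathcal{S}|\le X^{m_3}$, and the factor $Y^{-1}$ is the combined saving from the $\sigma_1$ and $\sigma_2$ constraints, so it cannot be spent twice. The normalisation of $\delta$ only affects constants. The loss is in your first localisation: plain H\"older over the $\asymp X^{1/2}$ intervals $I(\mathbf{q}^{(1)})$ costs $X^{r}$. The paper instead imports from the first half of the proof of Theorem 3.4 in \cite{Poulias1} the stronger reduction
\[ V_r((X,2X];1/2)\ll X^{r-1/2}\,V_r(I(\mathbf{q}^{(1)}|q_1);1/2). \]
One way to see this saving, consistent with the paper's concluding remark on the case $k=2$, $m=1$, is the following. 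Localise only $2r-2$ variables by H\"older, at cost $X^{r-1}$. For the remaining pair, $|x_1^\theta-x_{r+1}^\theta|\ll X^{\theta-1/2}$, hence $|x_1-x_{r+1}|\ll X^{1/2}$. So the pair lies in one of $O(X^{1/2})$ near-diagonal blocks $J^2$, with $J$ a union of $O(1)$ consecutive intervals $I(\mathbf{q}^{(1)})$, rather than in one of $\asymp X$ products of blocks. H\"older in $\alpha$ then bounds each block's contribution by a $2r$-th moment over a single interval of length $\asymp X^{1/2}$. With this in place the bookkeeping is $X^{r-1/2}\cdot X^{(r-m_3)/3}\cdot X^{m_3-1/2+\epsilon}=X^{4r/3+2m_3/3-1+\epsilon}$.

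\emph{Membership in $\mathcal{S}$ and the order of steps.} Your justification that the long block lies in $\mathcal{S}$ should be replaced; no Lemma \ref{lem:arkzhit}-style elimination or pigeonholing on $h_1,h_2$ is needed. Since all $2r$ variables lie in one interval of length $X^{1/2}$, Taylor expansion gives total $\sigma_1\ll 1$. Summing over these $O(1)$ values with the first estimate of Lemma \ref{lem:poulcount} (with $n=1$) lets you impose $\sigma_1=0$ exactly through $\boldsymbol{\beta}_1$. The long-block bound $\sigma_1\ll X^{1/3}$ then comes from the exact linear relation with the short block, not from Taylor cancellation. With the linear term gone, Taylor gives total $\sigma_2\ll X^{1/2}$, hence long-block $\sigma_2\ll X^{2/3}$.

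Do not impose $\sigma_2$ through $\boldsymbol{\beta}_2$ (i.e.\ $n=2$) at this stage. There the total $\sigma_2$ ranges over $\asymp X^{1/2}$ values, and summing the first estimate of Lemma \ref{lem:poulcount} over them would cost another $X^{1/2}$. Instead, keep $n=1$ and bound $|H_{\mathcal{S}}|\le|\mathcal{S}|$. Then reinterpret the remaining $\boldsymbol{\beta}_1$-integral as the count $V^1_{r-m_3}(I_0;1/2)$, which involves only variables in $I_0$. Only then Taylor-expand on $I_0$, of length $X^{1/3}$: this gives total $\sigma_2\ll 1$ and lets you pass to $\boldsymbol{\beta}_2$ at $O(1)$ cost.
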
 

\begin{proof} By Lemma \ref{lem:poulcount} it follows with $I_1=I = (X,2X]$ and $h = r$ that 
\[ \int_{-\kappa}^{\kappa} |g(\alpha)|^{2r} \d\alpha \ll 2\kappa V_{r}(I; (2\kappa)^{-1}), \]
where $V_{r}(I; (2\kappa)^{-1})$ is the number of solutions $(x_1,\ldots,x_{2r})\in I^{2r}$ to the inequality 
\[ |x_1^{\theta}+\cdots+x_r^{\theta}-x_{r+1}^{\theta}-\cdots-x_{2r}^{\theta}|<1/(2\kappa). \]
Since $\kappa \geq 1$, it follows that $V_{r}(I; (2\kappa)^{-1}) \leq V_{r}(I; 1/2)$. It suffices to bound this latter quantity. 

By definition, we have that $\mathbf{q}^{1}|q = (q,0,\ldots,0)$ for $0 \leq q \leq \lfloor X^{1/2}\rfloor$, and that 
\[ I(\mathbf{q}^{(1)}|q) = I(\mathbf{q}^{(1)}) = (X_{\mathbf{q}^{(1)}}, X_{\mathbf{q}^{(1)}}+X^{1/2}] = (X+qX^{1/2},X+(q+1)X^{1/2}]. \]
One then has 
\[ (X,2X] \subseteq \bigsqcup_{q=0}^{\lfloor X^{1/2} \rfloor} I({\mathbf{q}^{(1)}|q}). \]
Following the first half of the proof of Theorem 3.4 in \cite{Poulias1}, referring in particular to Equations 3.18 and 3.19 in \cite{Poulias1}, we may deduce that there exists $1 \leq q_1 \leq \lfloor X^{1/2}\rfloor$
such that 
\[ V_{r}(I; 1/2) \ll X^{r-1/2} \int_{-1}^{1} |g(\alpha; I(\mathbf{q}^{(1)}|q_1))|^{2r} \d\alpha \ll X^{r-1/2} V_r(I(\mathbf{q}^{(1)}|q_1),1/2), \]
where $V_r(I(\mathbf{q}^{(1)}|q_1),1/2)$ is the number of solutions $(x_1,\ldots,x_{2r}) \in I(\mathbf{q}^{(1)}|q_1)^{2r}$ to the inequality 
\[ |x_1^{\theta}+\cdots+x_r^{\theta}-x_{r+1}^{\theta}-\cdots-x_{2r}^{\theta}|<1/2. \]

Observe that for such a solution $(x_1,\ldots,x_{2r}) \in I(\mathbf{q}^{(1)}|q_1)^{2r}$ counted in $V_r(I;1/2)$, all of the $x_i$ lie in an interval of length $\ll X^{1/2}$, namely $(X_{\mathbf{q}^{(1)}}, X_{\mathbf{q}^{(1)}}+X^{1/2}]$. Letting $y_i = x_i - X_{\mathbf{q}^{(1)}}$, one therefore has $y_i \ll X^{1/2}$. Note also that $X \leq X_{\mathbf{q}^{(1)}} \leq 2X$. By Taylor expansion, one then has 
\begin{align*} 
1/2 > \Big| \sum_{i=1}^r (x_i^{\theta}-x_{i+r}^{\theta})\Big| & = | \theta X_{\mathbf{q}^{(1)}}^{\theta-1} \sum_{i=1}^r(y_i-y_{i+r})+ O(X^{\theta-2} \sum_{i=1}^{2r} |y_i|^2)| 
\\ & = | \theta X_{\mathbf{q}^{(1)}}^{\theta-1} \sum_{i=1}^r(y_i-y_{i+r})+ O(X^{\theta-1})|,
\end{align*}
and hence 
\[ |y_1+\cdots+y_r-y_{r+1}-\cdots-y_{2r}| \ll 1.\]
Thus, $|x_1+\cdots-x_{2r}| \ll 1$. Recalling the definition of $V_r^n(I;\delta,m)$ and using the second estimate in Lemma \ref{lem:poulcount} with $I_1 = I_2 = I(\mathbf{q}^{(1)}|q_1)$, one has 
\begin{align*} 
V_r(I(\mathbf{q}^{(1)}|q_1);1/2) & = \sum_{m \ll 1} V_{r}^1(I(\mathbf{q}^{(1)}|q_1); 1/2,m) 
\\ 
& \ll \sum_{m \ll 1} \int_{-1}^{1} \int_0^1 |H(\alpha,\boldsymbol{\beta}_1; I(\mathbf{q}^{(1)}|q_1) )|^{2r} \d \boldsymbol{\beta}_1 \d \alpha 
\\ 
& \ll \int_{-1}^{1} \int_0^1 | H(\alpha,\boldsymbol{\beta}_1;I(\mathbf{q}^{(1)}|q_1) )|^{2r} \d \boldsymbol{\beta}_1 \d \alpha \ll V_{r}^1( I(\mathbf{q}^{(1)}|q_1);1/2). 
\end{align*}

We repeat this process. For $0 \leq q \leq \lfloor X^{1/6}\rfloor$, let $\mathbf{q}^{(2)}|q = (q_1,q,0,\ldots,0)$. As before, we have 
\[ I(\mathbf{q}^{(1)}|q_1) \subseteq I^* := \bigsqcup_{q=0}^{\lfloor X^{1/6}\rfloor} I(\mathbf{q}^{(2)}|q). \]
Letting $I_1 = I(\mathbf{q}^{(1)}|q_1)$ and $I_2 = I^*$ and recalling the definition of $ V_{r,h}^n(I_1, I_2; \delta)$, one has 
\begin{align*} V_{r}^1( I(\mathbf{q}^{(1)}|q_1);1/2) & \leq V_{r,m_3}^1(I_1, I_2; 1/2) 
\\ 
& \ll \int_{-1}^1 \int_0^1 | H(\alpha,\boldsymbol{\beta}_1;I(\mathbf{q}^{(1)}|q_1) )|^{2m_3} | H(\alpha,\boldsymbol{\beta}_1;I^* )|^{2r-2m_3}\d\boldsymbol{\beta}_1\d\alpha, 
\end{align*}
where in the last estimate we used Lemma \ref{lem:poulcount} with $h = m_3$ and $n = 1$. 
Next, we have 
\begin{align*} | H(\alpha,\boldsymbol{\beta}_1;I^* )|^{2r-2m_3} & = \Big|\sum_{q=0}^{\lfloor X^{1/6} \rfloor} H(\alpha,\boldsymbol{\beta}_1; I(\mathbf{q}^{(2)}|q)) \Big|^{2r-2m_3} 
\\ 
& \ll X^{r/3-m_3/3} |H(\alpha,\boldsymbol{\beta}_1; I(\mathbf{q}^{(2)}|q_2))|^{2r-2m_3} 
\end{align*}
for some $0 \leq q_2 \leq \lfloor X^{1/6} \rfloor$. Inserting this into the integral estimate for $V_{r,m_3}^1(I_1, I_2; 1/2)$ and applying Lemma \ref{lem:poulcount} we have 
\begin{align*}
V_{r,m_3}^1(I_1, I_2; 1/2) & \ll X^{\frac{r}{3}-\frac{m_3}{3}} \int_{-1}^1 \int_0^1 | H(\alpha,\boldsymbol{\beta}_1;I(\mathbf{q}^{(1)}|q_1) )|^{2m_3} |H(\alpha,\boldsymbol{\beta}_1; I(\mathbf{q}^{(2)}|q_2))|^{2r-2m_3}\d\boldsymbol{\beta}_1\d\alpha 
\\ 
& \asymp X^{\frac{r}{3}-\frac{m_3}{3}} V_{r,m_3}^1(I(\mathbf{q}^{(1)}|q_1), I(\mathbf{q}^{(2)}|q_2);1/2). 
\end{align*}

By definition, $V_{r,m_3}^1(I(\mathbf{q}^{(1)}|q_1), I(\mathbf{q}^{(2)}|q_2);1/2)$ is the number of solutions to the system 
\begin{equation} \label{eq:extradeg1}
\begin{aligned}
& |x_1^{\theta}+\cdots+x_r^{\theta}-x_{r+1}^{\theta}-\cdots-x_{2r}^{\theta}| < 1/2, 
\\ 
& \sum_{i=1}^r (x_i-x_{i+r}) = 0, 
\end{aligned}
\end{equation}
with 
\[x_1,\ldots,x_{m_3}, x_{r+1}, \ldots,x_{r+m_3} \in I(\mathbf{q}^{(1)}|q_1)\] 
and $x_i \in I(\mathbf{q}^{(2)}|q_2);1/2)$ for all other remaining $i$. Let $T_2 = \lfloor X_{\mathbf{q}^{(2)}|q_2} \rfloor$ 
and let $z_i = x_i-T_2$ for $i = 1, \ldots, 2r$. Since 
\[x_i \in I(\mathbf{q}^{(2)}|q_2);1/2) = (X_{\mathbf{q}^{(2)}|q_2}, X_{\mathbf{q}^{(2)}|q_2}+X^{1/3}]\]
for $i = 3,\ldots,r,r+3,\ldots,2r$, it follows that $1 \leq z_i \leq X^{1/3}+1$ for these $i$. Moreover, we have 
\[x_i \in I(\mathbf{q}^{(1)}|q_1) = (X_{\mathbf{q}^{(1)}|q_1}, X_{\mathbf{q}^{(1)}|q_1}+X^{1/2}] \quad \text{ for } \quad i = 1,\ldots,m_3,r+1, \ldots, r+m_3 \] 
and 
\[X_{\mathbf{q}^{(2)}|q_2} \in [X_{\mathbf{q}^{(1)}|q_1}, X_{\mathbf{q}^{(1)}|q_1}+X^{1/2}]. \] 
Therefore, $|z_i| \leq X^{1/2}+1$ for these values of $i$. Using the linear equation in \eqref{eq:extradeg1}, we then have 
\[ \sum_{i=1}^{m_3} (z_i-z_{r+i}) = \sum_{i=m_3+1}^r (z_i-z_{r+i}) \ll X^{1/3}. \]
Applying Taylor expansion to the first line of \eqref{eq:extradeg1}, it follows that 
\begin{align*} 
1/2 &  > \Big| \theta T_2^{\theta-1}\sum_{i=1}^r (z_i-z_{i+r})+\binom{\theta}{2} T_2^{\theta-2} \sum_{i=1}^r(z_i^2-z_{i+r}^2) +O(T_2^{\theta-3} \sum_{i=1}^{2r} |z_i|^3) \Big| 
\\ 
& = \Big| \binom{\theta}{2} T_2^{\theta-2} \sum_{i=1}^r(z_i^2-z_{i+r}^2) +O(T_2^{\theta-3/2}) \Big| 
= \Big| \binom{\theta}{2} T_2^{\theta-2} \sum_{i=1}^{m_3} (z_i^2-z_{i+r}^2) +O(T_2^{\theta-4/3}) \Big|, 
\end{align*}
where in the first equality we used $z_1+\cdots-z_{2r} = 0$ and $|z_i| \ll X^{1/2}$ for all $i$, while in the second we used $|z_i| \ll X^{1/3}$ for $i \neq 1,\ldots,m_3,r+1, \ldots,r+m_3$. Rearranging gives us 
\[ |\sum_{i=1}^{m_3} (z_i^2-z_{r+i}^2)| \ll T_2^{2/3} \ll X^{2/3}. \]
Thus, there are constants $C_1, C_2$ independent of $X$ such that $(z_1,\ldots,z_{r+m_3},z_{r+1},\ldots, z_{r+m_3})$ is a solution to the system 
\begin{align*}
& |\sum_{i=1}^{m_3} (y_i^2-y_{m_3+i}^2)| \leq C_2X^{2/3}, 
\\ 
& |\sum_{i=1}^{m_3} (y_i-y_{m_3+i})| \leq C_1X^{1/3}. 
\end{align*}
Let $\mathcal{S}_3$ be the set of tuples $(x_1,\ldots, x_{m_3},x_{r+1},\ldots, x_{r+m_3}) \in I(\mathbf{q}^{(1)}|q_1)^{2m_3}$ such that the variables $z_i = x_i-T_2$ are solutions to the above. By Lemma \ref{lem:poulcount} we thus conclude 
\begin{align*} 
V_{r,m_3}^1(I(\mathbf{q}^{(1)}|q_1), I(\mathbf{q}^{(2)}|q_2);1/2) & = V_{r,m_3}^1(\mathcal{S}_3, I(\mathbf{q}^{(2)}|q_2);1/2)
\\ 
& \ll \int_{-1}^{1} \int_0^1 |H_{\mathcal{S}_3}(\alpha,\boldsymbol{\beta}_1)H(\alpha,\boldsymbol{\beta}_1;I(\mathbf{q}^{(2)}|q_2))^{2r-2m_3}| \d \boldsymbol{\beta}_1 \d \alpha.
\end{align*}
Since $(m_3,3)$ is a Diophantine pair with $|z_i| \ll X^{1/2}$, we have the estimate $ |\mathcal{S}_3| \ll X^{m_3-1/2+\epsilon}$. Hence, 
\begin{align*} 
\int_{-1}^{1} \int_0^1 |H_{\mathcal{S}_2}(\alpha,\boldsymbol{\beta}_1) & H(\alpha,\boldsymbol{\beta}_1;I(\mathbf{q}^{(2)}|q_2))^{2r-2m_3}| \d \boldsymbol{\beta}_1 \d \alpha 
\\ 
& \ll X^{m_3-1/2+\epsilon} \int_{-1}^{1} \int_0^1 |H(\alpha,\boldsymbol{\beta}_1;I(\mathbf{q}^{(2)}|q_2))|^{2r-2m_3} \d \boldsymbol{\beta}_1 \d \alpha. 
\end{align*}
Collecting all estimates together, we deduce that 
\begin{align*}
V_r(I;1/2) \ll X^{ 4r/3+2m_3/3-1+\epsilon}\int_{-1}^{1} \int_0^1 |H(\alpha,\boldsymbol{\beta}_1;I(\mathbf{q}^{(2)}|q_2))|^{2r-2m_3} \d \boldsymbol{\beta}_1 \d \alpha. 
\end{align*}
Applying Lemma \ref{lem:poulcount} once more, we have 
\begin{align*} 
\int_{-1}^{1} \int_0^1 |H(\alpha,\boldsymbol{\beta}_1;I(\mathbf{q}^{(2)}|q_2))|^{2r-2m_3} \d \boldsymbol{\beta}_1 \d \alpha \ll V_{r-m_3}^1(I(\mathbf{q}^{(2)}|q_2);1/2), 
\end{align*}
where we recall that the latter quantity is the number of integer solutions to the system 
\begin{equation} \label{eq:deg1minus4}
\begin{aligned}
& |x_1^{\theta}+\cdots+x_{r-m_3}^{\theta}-x_{r-m_3+1}^{\theta}-\cdots-x_{2r-2m_3}^{\theta}| < 1/2, 
\\ 
& \sum_{i=1}^{r-m_3} (x_i-x_{r-m_3+i}) = 0, 
\end{aligned}
\end{equation}
with $(x_1,\ldots,x_{2r-2m_3}) \in I(\mathbf{q}^{(2)}|q_2)^{2r-2m_3}$. Now, $z_i = x_i -T_2$ satisfies $1 \leq z_i \leq X^{1/3}+1$ for each $i$. Applying Taylor expansion to the first line of \eqref{eq:deg1minus4}, as before, then shows that 
\begin{align*} 
1/2 &  > \Big| \theta T_2^{\theta-1}\sum_{i=1}^{r-m_3} (z_i-z_{i+r-m_3})+\binom{\theta}{2} T_2^{\theta-2} \sum_{i=1}^{r-m_3} (z_i^2-z_{i+r-m_3}^2) +O\Big(T_2^{\theta-3} \sum_{i=1}^{2r-2m_3} |z_i|^3\Big) \Big| 
\\ 
& = \Big| \binom{\theta}{2} T_2^{\theta-2} \sum_{i=1}^{r-m_3} (z_i^2-z_{i+r-m_3}^2) +O(T_2^{\theta-2}) \Big|,  
\end{align*}
and hence 
\[ |z_1^2+\cdots+z_{r-m_3}^2-z_{r-m_3+1}^2 -\cdots-z_{2r-2m_3}^2| \ll 1.\]
Using the second equation in \eqref{eq:deg1minus4}, this then implies that 
\[|x_1^2+\cdots+x_{r-m_3}^2-x_{r-m_3+1}^2 -\cdots-x_{2r-2m_3}^2| \ll 1.\] Thus, 
\begin{align*}
V_{r-m_3}^1(I(\mathbf{q}^{(2)}|q_2);1/2) & = \sum_{m \ll 1} V_{r-m_3}^2(I(\mathbf{q}^{(2)}|q_2); 1/2,m)
\\ 
& \ll \sum_{m \ll 1} \int_{-1}^{1} \int_0^1 |H(\alpha,\boldsymbol{\beta}_2; I(\mathbf{q}^{(2)}|q_2) )|^{2r-2m_3} \d \boldsymbol{\beta}_2 \d \alpha 
\\ 
& \ll \int_{-1}^{1} \int_0^1 | H(\alpha,\boldsymbol{\beta}_2;I(\mathbf{q}^{(2)}|q_2) )|^{2r-2m_3} \d \boldsymbol{\beta}_2 \d \alpha \ll V_{r-m_3}^2( I(\mathbf{q}^{(2)}|q_2);1/2). 
\end{align*}
The claimed estimate follows from this. 
\end{proof}

We may iterate this procedure further. For $v \in \mathbb{N}$, let $\gamma_v = 1/(v+1)$. As before, the definitions of $\mathbf{q}^{(v)}$, $X_{\mathbf{q}^{(v)}}$ and $I(\mathbf{q}^{(v)})$ follow as constructed above. Note that 
\[ |I(\mathbf{q}^{(v)})| \asymp X^{\gamma_v} = X^{\frac{1}{v+1}}.\] 

\begin{lemma} \label{lem:meancasegen}
Let $\theta > 3$ be a non-integer and let $3 \leq n \leq \lfloor \theta \rfloor$ be an integer. Let $(m_k,k)$ be a sequence of Diophantine pairs for $3 \leq k \leq n$ and let $M_k = m_3+\cdots+m_k$. For $X, \kappa \geq 1$, there exists an interval $I_0 \subseteq (X,2X]$ of the form $I_0 = I(\mathbf{q}^{(n-1)})$ such that for every $\epsilon > 0$ one has 
\[ \int_{-\kappa}^{\kappa} |g(\alpha)|^{2r} \d\alpha \ll_{\epsilon} \kappa X^{2r(1-\frac{1}{n})+\frac{2M_n}{n}-\frac{n-1}{2}+\epsilon} \int_{-1}^{1} \int_{0}^{1} |H(\alpha,\boldsymbol{\beta}_{n-1};I_0 )|^{2r-2M_n} \d \boldsymbol{\beta}_{n-1} \d \alpha. \]
\end{lemma}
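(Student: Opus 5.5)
We argue by induction on $n$, with Lemma \ref{lem:meancase2} as the base case $n=3$. Check the exponent there: $2r(1-\tfrac13)+\tfrac{2M_3}{3}-1 = \tfrac{4r}{3}+\tfrac{2m_3}{3}-1$, which matches. To make the induction work, we strengthen the hypothesis slightly. We carry along not only the bound but also the structure of the last interval: it has the form $I(\mathbf{q}^{(n-1)})$, of length $\asymp X^{1/n}$, and the remaining integral is bounded by $V^{n-1}_{r-M_n}(I(\mathbf{q}^{(n-1)});1/2)$. That is, all power sums of degree $1,\dots,n-1$ of the surviving $2r-2M_n$ variables vanish exactly. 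The proof of Lemma \ref{lem:meancase2} ends in exactly this form. The Taylor step there (the error $T^{\theta-3}X^{3\gamma_2}\ll T^{\theta-2}$) upgrades the degree-$2$ sum to being $\ll 1$, and orthogonality with $\boldsymbol{\beta}_2$ then gives exact vanishing up to $O(1)$ shifts $m$.

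For the inductive step from $n$ to $n+1$ (with $n+1\le\lfloor\theta\rfloor$), we start from $V^{n-1}_{r'}(I_0;1/2)$, where $r'=r-M_n$ and $I_0=I(\mathbf{q}^{(n-1)})$. We use Lemma \ref{lem:poulcount} to bound it by $V^{n-1}_{r',m_{n+1}}(I_0,I^*;1/2)$, where $I^*$ is the disjoint union of the $\asymp X^{1/n-1/(n+1)}$ subintervals $I(\mathbf{q}^{(n)}|q)$. This keeps $2m_{n+1}$ variables in $I_0$ and puts the rest in $I^*$. Applying Hölder/pigeonhole to $|\sum_q H(\cdot;I(\mathbf{q}^{(n)}|q))|^{2r'-2m_{n+1}}$ picks a single $q_n$ at cost $X^{(2r'-2m_{n+1})(\frac1n-\frac1{n+1})}$. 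We then return to a counting function with the two interval types. Next we centre at $T=\lfloor X_{\mathbf{q}^{(n)}|q_n}\rfloor$, so that $|z_i|\ll X^{1/n}$ for the $2m_{n+1}$ special variables and $|z_i|\ll X^{1/(n+1)}$ otherwise. The exact equations of degrees $1,\dots,n-1$ transfer to the $z_i$ by binomial expansion, and they force
\[\Big|\sum_{i\le m_{n+1}}(z_i^j-z_{r'+i}^j)\Big|\ll X^{j/(n+1)}\quad(1\le j\le n-1).\]
For $j=n$ we Taylor-expand the $\theta$-inequality. The terms of degree $<n$ vanish, and the tail is $O(T^{\theta-n-1}X^{(n+1)/n})=O(T^{\theta-n})$ because $\theta>n$. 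The degree-$n$ contribution of the short variables is $O(T^{\theta-n}X^{n/(n+1)})$. Hence the special variables' degree-$n$ sum is $\ll X^{n/(n+1)}$. Rescaling with $Y=X^{1/n}$ gives exactly the system \eqref{eq:diopairsystem} with $k=n+1$, since $i(1-\tfrac1{n+1})/n\cdot n=i/(n+1)$ in terms of $X$. Admissibility of $(m_{n+1},n+1)$ then bounds the special set $\mathcal{S}$ by $X^{(2m_{n+1}-n/2)/n+\epsilon}$. The first estimate of Lemma \ref{lem:poulcount}, with $H_{\mathcal S}$ bounded trivially by $|\mathcal S|$, strips these variables off.

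What remains is $V^{n-1}_{r'-m_{n+1}}(I(\mathbf{q}^{(n)});1/2)$. A final Taylor expansion, in which all variables are now $\ll X^{1/(n+1)}$ and the degree-$(n+1)$ tail is $\ll T^{\theta-n}$ since $\theta>n$, makes the degree-$n$ power sum $O(1)$. Summing over $O(1)$ values $m$ and using orthogonality in $\beta_n$ gives the integral with $\boldsymbol{\beta}_n$ over $I(\mathbf{q}^{(n)})$, which closes the induction.

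The main obstacle is the exponent bookkeeping, which has to reproduce $2r(1-\tfrac1{n+1})+\tfrac{2M_{n+1}}{n+1}-\tfrac n2$. The increment of the exponent across the step is
\[(2r-2M_n-2m_{n+1})\Big(\tfrac1n-\tfrac1{n+1}\Big)+\tfrac{2m_{n+1}}{n}-\tfrac12 .\]
Adding this to $2r(1-\tfrac1n)+\tfrac{2M_n}{n}-\tfrac{n-1}{2}$ gives
\[2r\Big(1-\tfrac1{n+1}\Big)+2M_n\Big(\tfrac1n-\tfrac1n+\tfrac1{n+1}\Big)+2m_{n+1}\Big(\tfrac1n-\tfrac1n+\tfrac1{n+1}\Big)-\tfrac n2,\]
which is the desired exponent. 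The delicate points are to check that every Taylor error term is genuinely $\ll T^{\theta-n}$ using $n<\theta$, and that the centring shift preserves the exact lower-degree equations.
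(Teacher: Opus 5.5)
Your proposal is correct and follows the paper's proof essentially step for step. The shared steps are: induction from Lemma~\ref{lem:meancase2}; splitting off $2m_{n+1}$ variables; passing to a single subinterval $I(\mathbf{q}^{(n)}|q_n)$ (you do this correctly via H\"older and then choosing the $q$ maximizing the integral, whereas the paper chooses $q$ pointwise); recentring; applying admissibility of $(m_{n+1},n+1)$ with $Y=X^{1/n}$; and a final Taylor step that upgrades the exact equations to degree $n$.

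One small slip: the Taylor tail from the long variables is $O(T^{\theta-n-1}X^{(n+1)/n})=O(T^{\theta-n}X^{1/n})$, not $O(T^{\theta-n})$. Since $X^{1/n}\le X^{n/(n+1)}$, your bound $\ll X^{n/(n+1)}$ on the special degree-$n$ sum is unaffected; after the index shift this is the paper's $O(X^{\theta-n+\frac{n}{n-1}})$ term.
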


\begin{proof} We proceed by induction on $n$. By Lemma \ref{lem:meancase2} the case $n=3$ follows. Indeed, we have
\[ \int_{-\kappa}^{\kappa} |g(\alpha)|^{2r} \d\alpha \ll \kappa X^{2r(1-\frac{1}{3})+\frac{2m_3}{3}-\frac{3-1}{2}+\epsilon} \int_{-1}^1 \int_0^1 |H(\alpha,\boldsymbol{\beta}_{n-1};I_0 )|^{2r-2m_3} \d \boldsymbol{\beta}_{2} \d \alpha. \]
Therefore, we assume $\theta > 4$ and let $3 < n \leq \lfloor \theta \rfloor$ such that Lemma \ref{lem:meancasegen} holds for $n-1$. Then there exists an interval $I \subseteq (X,2X]$ such that $I = I(\mathbf{q}^{(n-2)})$ and 
\[ \int_{-\kappa}^{\kappa} |g(\alpha)|^{2r} \d\alpha \ll \kappa X^{2r(1-\frac{1}{n-1})+\frac{2M_{n-1}}{n-1}-\frac{n-2}{2}+\epsilon} \int_{-1}^{1} \int_{0}^{1} |H(\alpha,\boldsymbol{\beta}_{n-2};I )|^{2r-2M_{n-1}} \d \boldsymbol{\beta}_{n-2} \d \alpha. \]
Applying Lemma \ref{lem:poulcount} we have 
\begin{align*} 
\int_{-1}^{1} \int_{0}^{1} |H(\alpha,\boldsymbol{\beta}_{n-2};I )|^{2r-2M_{n-1}} \d \boldsymbol{\beta}_{n-2} \d \alpha \ll V_{r-M_{n-1}}^{n-2}(I;1/2).
\end{align*}
Let $r_0 = r-M_{n-1}$. The right hand side is the number of integer solutions $(x_1,\ldots, x_{2r_0}) \in I^{2r_0}$ to the system 
\begin{equation} \label{eq:degminustwo}
\begin{aligned}
|x_1^{\theta}+\cdots+x_{r_0}^{\theta}-x_{r_0+1}^{\theta}-\cdots-x_{r_0+1}^{\theta}| < 1/2, 
\\
\sum_{i=1}^{r_0} (x_i^j-x_{r_0+i}^j) = 0 \quad \text{ for } \quad 1 \leq j \leq n-2.
\end{aligned}
\end{equation}
As observed earlier, 
\[ I = I(\mathbf{q}^{(n-2)}) \subseteq \bigsqcup_{q=0}^{\lfloor X^{\frac{1}{n(n-1)}} \rfloor} I({\mathbf{q}^{(n-1)}|q}). \]
Define $I^*$ to be the interval that is the disjoint union on the right. Therefore, it follows that 
\[ V_{r_0}^{n-2}(I;1/2) \ll V_{r_0,m_n}^{n-2}(I, I^* ;1/2), \]
where we recall that the quantity on the right is the number of integer solutions $(x_1,\ldots, x_{2r_0}) $ to the system above with
\[ (x_1,\ldots, x_{m_n}, x_{r_0+1},\ldots,x_{r_0+m_n}) \in I^{2m_n} \] 
and $x_i \in I^*$ for all other $i$. 

Applying Lemma \ref{lem:poulcount}, we have 
\[ V_{r_0,m_n}^{n-2}(I, I^* ;1/2) \ll \int_{-1}^{1} \int_{0}^{1} |H(\alpha,\boldsymbol{\beta}_{n-2};I )|^{2m_n} |H(\alpha,\boldsymbol{\beta}_{n-2};I^* )|^{2r_0-2m_n} \d \boldsymbol{\beta}_{n-2} \d \alpha. \]
By definition of $I^*$, we use the Triangle Inequality to deduce that 
\begin{align*} |H(\alpha,\boldsymbol{\beta}_{n-2};I^* )|^{2r_0-2m_n} & \leq \Big( \sum_{q = 1}^{\lfloor X^{\frac{1}{n(n-1)}} \rfloor} |H(\alpha,\boldsymbol{\beta}_{n-2};I({\mathbf{q}^{(n-1)}|q}) )|  \Big)^{2r_0-2m_n}
\\ 
& \ll X^{\frac{2r_0}{n(n-1)}-\frac{2m_n}{n(n-1)}} |H(\alpha,\boldsymbol{\beta}_{n-2};I({\mathbf{q}^{(n-1)}|q_{n-1}}) )|^{2r_0-2m_n}
\end{align*}
for some $1 \leq q_{n-1} \leq \lfloor X^{\frac{1}{n(n-1)}} \rfloor$. Using this inequality and Lemma \ref{lem:poulcount} yet again, one has 
\begin{align*} 
\int_{-1}^{1} & \int_{0}^{1} |H(\alpha,\boldsymbol{\beta}_{n-2};I )|^{2m_n} |H(\alpha,\boldsymbol{\beta}_{n-2};I^* )|^{2r_0-2m_n} \d \boldsymbol{\beta}_{n-2} \d \alpha.
\\ 
& \ll X^{\frac{2r_0}{n(n-1)}-\frac{2m_n}{n(n-1)}} \int_{-1}^{1} \int_{0}^{1} |H(\alpha,\boldsymbol{\beta}_{n-2};I )|^{2m_n} |H(\alpha,\boldsymbol{\beta}_{n-2};I({\mathbf{q}^{(n-1)}|q_{n-1}}) )|^{2r_0-2m_n} \d \boldsymbol{\beta}_{n-2} \d \alpha 
\\ 
& \ll X^{\frac{2r-2M_{n-1}}{n(n-1)}-\frac{2m_n}{n(n-1)}} V_{r_0,m_n}^{n-2}(I,I({\mathbf{q}^{(n-1)}|q_{n-1}}),1/2).
\end{align*}
Combining this estimate with the inductive assumption we have 
\begin{align*} 
\int_{-\kappa}^{\kappa} |g(\alpha)|^{2r} \d\alpha \ll \kappa X^{2r(1-\frac{1}{n})+\frac{2M_{n-1}}{n}-\frac{n-2}{2}-\frac{2m_{n}}{n(n-1)}+\epsilon} V_{r_0,m_n}^{n-2}(I,I({\mathbf{q}^{(n-1)}|q_{n-1}}),1/2). 
\end{align*}
It therefore suffices to show for any $\epsilon > 0$ that 
\begin{align*} V_{r_0,m_n}^{n-2} & (I,I({\mathbf{q}^{(n-1)}|q_{n-1}}),1/2) 
\\ 
& \ll X^{\frac{2m_n}{n-1}-\frac{1}{2}+\epsilon} \int_{-1}^{1} \int_{0}^{1} |H(\alpha,\boldsymbol{\beta}_{n-1};I({\mathbf{q}^{(n-1)}|q_{n-1}}) )|^{2r-2m_n} \d \boldsymbol{\beta}_{n-1} \d \alpha. 
\end{align*}
By definition, $V_{r_0,m_n}^{n-2}(I,I({\mathbf{q}^{(n)}|q_{n-1}}),1/2)$ counts the integer solutions $(x_1,\ldots,x_{2r_0})$ to \eqref{eq:degminustwo} with 
\[ x_1,\ldots,x_{m_n},x_{r_0+1},\ldots,x_{r_0+m_n} \in I = (X_{\mathbf{q}^{(n-2)}}, X_{\mathbf{q}^{(n-2)}}+X^{1/(n-1)}]\] 
and 
\[x_i \in I({\mathbf{q}^{(n-1)}|q_{n-1}}) = (X_{\mathbf{q}^{(n-1)}|q_{n-1}}, X_{\mathbf{q}^{(n-1)}|q_{n-1}}+X^{1/n}] \] 
for all remaining $i$. Write $T_{n-1} = \lfloor X_{\mathbf{q}^{(n-1)}|q_{n-1}} \rfloor $ and $z_i = x_i-T_{n-1}$. Then 
\[ 1 \leq z_i \leq X^{1/n}+1 \quad \text{ for } \quad i = m_n+1,\ldots,r_0,r_0+m_n+1,\ldots,2r_0. \] Moreover, since 
\[ X_{\mathbf{q}^{(n-1)}|q_{n-1}} \in [X_{\mathbf{q}^{(n-2)}},X_{\mathbf{q}^{(n-2)}}+X^{1/(n-1)}],\] 
it follows that $|z_i| \leq X^{1/(n-1)}+1$ for $i = 1,\ldots,m_n,r_0+1,\ldots,r_0+m_n$. Taylor expansion of the top line in \eqref{eq:degminustwo} allows us to deduce then that 
\begin{align*} 
1/2 & > \Big|\sum_{j=1}^{n-2} \binom{\theta}{j} T_{n-1}^{\theta-j} \sum_{i=1}^{r_0} (z_i^j-z_{r_0+i}^j) + \binom{\theta}{n-1}T_{n-1}^{\theta-n+1} \sum_{i=1}^{r_0} (z_i^{n-1} -z_{r_0+i}^{n-1}) + O\Big(X^{\theta-n}\sum_{i=1}^{2r_0} |z_i|^n\Big) \Big| 
\\ 
& = \Big| \binom{\theta}{n-1}T_{n-1}^{\theta-n+1} \sum_{i=1}^{r_0} (z_i^{n-1} -z_{r_0+i}^{n-1}) + O(X^{\theta-n+\frac{n}{n-1}}) \Big|
\\ 
& = \Big| \binom{\theta}{n-1}T_{n-1}^{\theta-n+1} \sum_{i=1}^{m_n} (z_i^{n-1} -z_{r_0+i}^{n-1}) +O(X^{\theta-n+1+\frac{n-1}{n}}) + O(X^{\theta-n+\frac{n}{n-1}}) \Big|,
\end{align*}
where in the last equality we used the bound $|z_i|, |z_{r_0+i|} \ll X^{1/n}$ for $i \neq 1,\ldots,m_n$. Hence, 
\[ |z_1^{n-1}+\cdots+z_{m_n}^{n-1}-z_{r_0+1}^{n-1}-\cdots-z_{r_0+m_n}^{n-1}| \ll X^{\frac{n-1}{n}}. \] 
Since the system of equations described by the second line in \eqref{eq:degminustwo} is translation invariant in the sense that $(a_1,\ldots,a_{2r_0})$ is a solution if and only if $(a_1+b,\ldots, a_{2r_0}+b)$ is, we have that $(z_1,\ldots,z_{2r_0})$ is a solution to that system as well. Thus, 
\[ \sum_{i=1}^{r_0} (z_i^j-z_{r_0+i}^j) = 0 \quad \text{ for } \quad 1 \leq j \leq n-2, \]
and therefore 
\[ \sum_{i=1}^{m_n} (z_i^j-z_{r_0+i}^j) = -\sum_{i=m_n+1}^{r_0} (z_i^j-z_{r_0+i}^j) \ll X^{j/n} \quad \text{ for } \quad 1 \leq j \leq n-2. \]
Hence, there exist constants $C_1,\ldots, C_{n-1}$ such that 
\[ \Big| \sum_{i=1}^{m_n} (z_i^j - z_{r_0+i}^j ) \Big| \leq C_j X^{\frac{j}{n}} \quad \text{ for } \quad 1 \leq j \leq n-1. \]
Let $\mathcal{S}_{n}$ be the set of tuples $(x_1,\ldots,x_{r_0+m_n}) \in I^{2m_n}$ such that the variables $z_i=x_i-T_{n-1}$ are solutions to the above system. Since $(m_n,n)$ is a Diophantine pair, we have $|\mathcal{S}_{n}| \ll X^{\frac{2m_n}{n-1}-\frac{1}{2}+\epsilon}$. Here, note that we have used $Y = X^{\frac{1}{n-1}}$. Applying Lemma \ref{lem:poulcount}, it follows that 
\begin{align*} 
V_{r_0,m_n}^{n-2} & (I,I({\mathbf{q}^{(n-1)}|q_{n-1}}),1/2) 
\\ 
&\ll \int_{-1}^{1} \int_{0}^{1} |H_{\mathcal{S}_2}(\alpha, \boldsymbol{\beta}_{n-2})| |H(\alpha,\boldsymbol{\beta}_{n-2};I({\mathbf{q}^{(n-1)}|q_{n-1}}) )|^{2r_0-2m_n} \d \boldsymbol{\beta}_{n-2} \d \alpha 
\\ 
& \ll X^{\frac{2m_n}{n-1}-\frac{1}{2}+\epsilon} \int_{-1}^{1} \int_{0}^{1} |H(\alpha,\boldsymbol{\beta}_{n-2};I({\mathbf{q}^{(n-1)}|q_{n-1}}) )|^{2r_0-2m_n} \d \boldsymbol{\beta}_{n-2} \d \alpha. 
\\ 
& \ll X^{\frac{2m_n}{n-1}-\frac{1}{2}+\epsilon} V_{r_0-m_n}^{n-2}(I({\mathbf{q}^{(n-1)}|q_{n-1}});1/2). 
\end{align*}
Now we have $z_i = x_i-T_{n-1} \in [1,X^{1/n}+1]$ for each $i$, and again these variables satisfy \eqref{eq:degminustwo} with $r_0$ replaced with $r_1 = r_0-m_n=r-M_n$. Taylor expanding again, we therefore deduce that 
\begin{align*} 
1/2 & > \Big| \sum_{j=1}^{n-2} \binom{\theta}{j} T_{n-1}^{\theta-j} \sum_{i=1}^{r_1} (z_i^j-z_{r_1+i}^j) + \binom{\theta}{n-1}T_{n-1}^{\theta-n+1} \sum_{i=1}^{r_1} (z_i^{n-1} -z_{r_1+i}^{n-1}) + O\Big(X^{\theta-n}\sum_{i=1}^{2r_1} |z_i|^n\Big) \Big|
\\ 
& = \Big| \binom{\theta}{n-1}T_{n-1}^{\theta-n+1} \sum_{i=1}^{r_1} (z_i^{n-1} -z_{r_1+i}^{n-1}) + O(X^{\theta-n+1}) \Big|,
\end{align*}
and hence 
\[ \sum_{i=1}^{r_1} (z_i^{n-1} -z_{r_1+i}^{n-1}) \ll 1. \]
This shows, after applying Lemma \ref{lem:poulcount}, that 
\begin{align*}
V_{r_1}^{n-2}(I({\mathbf{q}^{(n-1)}|q_{n-1}});1/2) & = \sum_{m \ll 1} V_{r_1}^{n-1}(I({\mathbf{q}^{(n-1)}|q_{n-1}});1/2, m)
\\ 
& \ll \sum_{m \ll 1} \int_{-1}^{1} \int_{0}^{1} |H(\alpha,\boldsymbol{\beta}_{n-1};I({\mathbf{q}^{(n-1)}|q_{n-1}}) )|^{2r_1} \d \boldsymbol{\beta}_{n-1} \d \alpha
\\ 
& \ll \int_{-1}^{1} \int_{0}^{1} |H(\alpha,\boldsymbol{\beta}_{n-1};I({\mathbf{q}^{(n-1)}|q_{n-1}}) )|^{2r_1} \d \boldsymbol{\beta}_{n-1} \d \alpha. 
\end{align*}
Recalling that $r_1 = r_0-m_n = r-M_n$, it follows by combining the estimates that 
\begin{align*} 
\int_{-\kappa}^{\kappa} |g(\alpha)|^{2r} \d\alpha & \ll \kappa X^{2r(1-\frac{1}{n})+\frac{2M_{n-1}}{n}-\frac{n-2}{2}-\frac{2m_{n}}{n(n-1)}+\epsilon} V_{r_0,m_n}^{n-2}(I,I({\mathbf{q}^{(n-1)}|q_{n-1}}),1/2) \\ 
& \ll \kappa X^{2r(1-\frac{1}{n})+\frac{2M_{n-1}}{n}-\frac{n-1}{2}+\frac{2m_{n}}{n}+\epsilon} V_{r_0-m_n}^{n-2}(I({\mathbf{q}^{(n-1)}|q_{n-1}});1/2)
\\ 
& \ll \kappa X^{2r(1-\frac{1}{n})+\frac{2M_{n}}{n}-\frac{n-1}{2}+\epsilon} \int_{-1}^{1} \int_{0}^{1} |H(\alpha,\boldsymbol{\beta}_{n-1};I({\mathbf{q}^{(n-1)}|q_{n-1}}) )|^{2r-2M_n} \d \boldsymbol{\beta}_{n-1} \d \alpha, 
\end{align*}
which completes the proof. 
\end{proof}

From here, a familiar Taylor series argument from \cite{ArkhZhit} and \cite{Poulias1} completes the argument. 
We record this as a lemma. 

\begin{lemma} \label{lem:taylor}
Let $\theta > 3$. Let $I = (X_0,X_0+X_1] \subseteq [X,2X]$ be an interval with $|I| \asymp X^{1/v}$ for some integer $ 2 \leq v \leq \lfloor \theta \rfloor$, and let $1 \leq u \leq v$ be  another integer. For 
\[ 2r \geq \lceil \theta(1-1/v)^{-1} \rceil(\lceil \theta(1-1/v)^{-1} \rceil+1)+2M_v, \] 
one has 
\[ \int_{-1}^1 \int_0^1 |H(\alpha, \boldsymbol{\beta}_{u-1};I)|^{2r-2M_v} \d\boldsymbol{\beta}_{u-1}\d\alpha \ll_{\epsilon} X^{\frac{2r}{v}-\frac{2M_v}{v}+\frac{v-1}{2}-\theta+\epsilon}.  \]
\end{lemma}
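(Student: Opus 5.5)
By Lemma \ref{lem:poulcount}, with $n=u-1$, $\delta=1/2$ and $r$ replaced by $s:=r-M_v$, the integral on the left is $\asymp V_{s}^{u-1}(I;1/2)$. This is the number of solutions $\mathbf{x}\in I^{2s}$ to the $\theta$-inequality $|\sum_{i\le s}(x_i^{\theta}-x_{s+i}^{\theta})|<1/2$ together with the equations $\sum_{i\le s}(x_i^j-x_{s+i}^j)=0$ for $1\le j\le u-1$. We drop these equations, which only enlarges the count, and bound the number of solutions of the $\theta$-inequality alone on $I^{2s}$.

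Put $T=\lfloor X_0\rfloor$ and $z_i=x_i-T$, so that $0\le z_i\ll P^{\gamma}$ with $P\asymp T\asymp X$ and $\gamma=1/v\le 1/2$. Let $k=\lceil\theta(1-\gamma)^{-1}\rceil$. We Taylor expand to order $k$:
\[ \sum_{i\le s}(x_i^\theta-x_{s+i}^\theta)=\sum_{j=1}^{k} b_jT^{\theta-j}h_j+O\big(X^{\theta-k-1+\gamma(k+1)}\big),\qquad h_j=\sum_{i\le s}(z_i^j-z_{s+i}^j). \]
The error is $O(1)$ because $k+1>\theta(1-\gamma)^{-1}$, that is, $(k+1)(1-\gamma)>\theta$. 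Hence $|\mathcal{H}|\ll 1$, where $\mathcal{H}=\sum_{j\le k}b_jT^{\theta-j}h_j$, and moreover $|h_j|\ll P^{\gamma j}$. This is exactly the setting of Lemma \ref{lem:arkzhit} with $m=k\ge\theta(1-\gamma)^{-1}$, $t\asymp 1$ and $P=T$. By Remark \ref{rem:1}, since $\gamma=1/v$ with $2\le v\le\lfloor\theta\rfloor$, the number of admissible tuples $(h_1,\dots,h_k)$ is
\[ \ll X^{\frac{k(k+1)}{2v}+\frac{v-1}{2}-\theta}. \]

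For each fixed tuple $\mathbf{h}$, the number of $\mathbf{z}$ with $z_i\in[0,X^{1/v}+1]$ realising it is bounded by orthogonality and the triangle inequality by the Vinogradov mean value $J_{s,k}(X^{1/v})$, exactly as in the proof of Lemma \ref{lem:vinodiopair}. Since $2s=2r-2M_v\ge k(k+1)$, the Main Conjecture (\cite{BDG}, \cite{WooNEC}) gives
\[ J_{s,k}(Y)\ll Y^{2s-k(k+1)/2+\epsilon}, \qquad Y=X^{1/v}. \]
Multiplying the two counts, the powers of $X$ coming from $k(k+1)/(2v)$ cancel, and we obtain
\[ V_s^{u-1}(I;1/2)\ll X^{\frac{2s}{v}+\frac{v-1}{2}-\theta+\epsilon}, \]
which is the claimed bound since $2s/v=2r/v-2M_v/v$.

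The main obstacle is checking the hypotheses of Lemma \ref{lem:arkzhit}. First, its variables must satisfy $|h_j|\ll tP^{\gamma j}$ with $t\asymp 1$, and the implied constants must be uniform in $j$; we take $t$ to be a suitable constant. Second, Remark \ref{rem:1} requires $v\le\lfloor\theta\rfloor$, so that the first case of the lemma applies. Third, the counting must be organised so that the $\mathbf{h}$-sum and the Vinogradov bound combine without extra loss. We sum over $\mathbf{h}$ in the set counted by Lemma \ref{lem:arkzhit} and bound each fibre by $J_{s,k}$, whose value with shifts $\mathbf{h}$ is at most its value at $\mathbf{h}=0$. A minor point is that $T^{\theta-j}$ differs from $X^{\theta-j}$ only by a constant factor, so applying the lemma with $P=T$ is legitimate.
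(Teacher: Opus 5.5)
Your proposal is correct and follows the paper's proof essentially step for step. You reduce via Lemma \ref{lem:poulcount} to counting solutions of the $\theta$-inequality, Taylor expand about $T$ to order $k=\lceil\theta(1-1/v)^{-1}\rceil$, count the admissible $\mathbf{h}$ with Lemma \ref{lem:arkzhit} and Remark \ref{rem:1}, and bound each fibre by $J_{s,k}(Y)$ via the Main Conjecture; your differences (taking $T=\lfloor X_0\rfloor$ so that the $z_i$ are integers, and $t$ a fixed constant instead of $t=r_0$) are cosmetic and if anything slightly tidier than the paper's write-up.
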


\begin{proof} Write $r_0 = r-M_v$. By Lemma \ref{lem:poulcount}, one has that 
\[ \int_{-1}^1 \int_0^1 |H(\alpha, \boldsymbol{\beta}_{u-1};I)|^{2r_0} \d\boldsymbol{\beta}_{u-1}\d\alpha \ll V_{r_0}^{n-1}(I;1/2), \]
the latter quantity being the number of solutions $(x_1,\ldots,x_{2r_0}) \in I^{2r_0} $ to the system 
\begin{align*} 
& \Big| \sum_{i=1}^{r_0} (x_i^{\theta} -x_{r_0+i}^{\theta}) \Big| < 1/2, 
\\ 
& \sum_{i=1}^{r_0} (x_i^j-x_{r_0+i}^j) = 0, \quad j= 1, \ldots, u-1.
\end{align*}
The number of solutions this system is obviously bounded by the number of solutions to the first line, namely the inequality. Let $T = X_0$, and let $z_i = x_i-T$. By Taylor expansion with $k \geq \lceil \theta (1-1/v)^{-1} \rceil$ terms, we see that the Diophantine inequality above is equivalent to 
\begin{align} \label{eq:taylorineq}
\Big| \sum_{j=1}^{k} \binom{\theta}{j} T^{\theta-j} \sum_{i=1}^{r_0} (z_i^j-z_{r_0+i}^j) + \sum_{i=1}^{r_0}(R_k(z_i)-R_k(z_{r_0+i})) \Big| < 1/2,
\end{align}
where $R_k$ is the remainder from Taylor's Theorem. It is well known that 
\[ R_k(z_i) \ll T^{\theta-k-1}|z_i|^{k+1} \ll T^{\theta-k-1}X^{(k+1)/v} \ll X^{\theta-(k+1)(1-1/v)}, \]
and since $k > \theta (1-1/v)^{-1}-1$ it follows that the sum of the remainders in \eqref{eq:taylorineq} is $o(1)$. Therefore, for sufficiently large $X$ the number of solutions to \eqref{eq:taylorineq} is bounded above by 
the number of solutions $(z_1,\ldots,z_{2r_0}) \in [1,X_1+1]^{2r_0}$ to the inequality 
\[ \Big| \sum_{j=1}^{k} \binom{\theta}{j} T^{\theta-j} \sum_{i=1}^{r_0} (z_i^j-z_{r_0+i}^j) \Big| < 1, \]
which is in turn bounded by the number of solutions $(z_1,\ldots,z_{2r_0})$ to the system 
\[  \begin{cases}
      \Big|\binom{\theta}{1} T^{\theta-1}h_1+\cdots+\binom{\theta}{k} T^{\theta-k}h_k| < 1,  \\
      \sum_{i=1}^{r_0} (z_i^j-z_{r_0+i}^j) = h_j \quad (1 \leq j \leq k)
    \end{cases} 
\] 
with $1 \leq z_i \leq X_1+1$. Write $Y = X_1+1$. Write $Z_{r_0,k}(Y; \mathbf{h})$ to denote the number of solutions to this system, where $\mathbf{h} = (h_1,\cdots, h_k)$. Note that the integers $h_j$ satisfy $|h_j| \leq r_0Y^{j}$. 

Define $J_{r_0,k}(Y; \mathbf{h})$ to be the number of integer solutions to the system 
\[ \sum_{i=1}^{r_0} (z_i^j - z_{r_0+i}^j) = h_j \quad (1 \leq j \leq k), \]
with $0 < z_i \leq Y$. When $\mathbf{h} = \mathbf{0}$ we simply abbreviate this as $J_{r_0,k}(Y)$. By orthogonality it follows that $J_{r_0,k}(Y; \mathbf{h}) \leq J_{r_0,k}(Y)$ for any $\mathbf{h}$. By the Main Conjecture of Vinogradov's Mean Value Theorem, one has for $r_0 \geq k(k+1)/2$ that 
\[ J_{r_0,k}(Y; \mathbf{h}) \leq J_{r_0,k}(Y) \ll Y^{2r_0-k(k+1)/2+\epsilon}. \]

As a result, one has 
\[ Z_{r_0,k}(Y; \mathbf{h}) \ll \sum_{\substack{ |h_j| \leq r_0Y^j \\ 1 \leq j \leq k \\ |\mathcal{H}(\mathbf{h})| < 1 }} J_{r_0,k}(Y; \mathbf{h}) \ll Y^{2r_0- k(k+1)/2 + \epsilon} \sum_{\substack{ |h_j| \leq r_0Y^j \\ 1 \leq j \leq k \\ |\mathcal{H}(\mathbf{h})| < 1 }} 1, \]

where 
\[\mathcal{H}(\mathbf{h}) = \binom{\theta}{1}T^{\theta-1}h_1+\cdots+\binom{\theta}{k} T^{\theta-k}h_k. \] 
Since $X \leq T+1 \leq 2X+1$ and $|I| \asymp X^{1/v}$, we have $Y \asymp X^{1/v} \asymp T^{1/v}$. We apply Lemma \ref{lem:arkzhit} with $P = T$, $t = r_0$, $\gamma = 1/v$, and $m = k$. Noting that $v^{-1} \geq \lfloor \theta \rfloor^{-1}$, Remark \ref{rem:1} reveals that 
\[ \sum_{\substack{ |h_j| \leq r_0Y^j \\ 1 \leq j \leq k \\ |\mathcal{H}(\mathbf{h})| < 1 }} 1 \ll T^{v^{-1} k(k+1)/2+\upsilon-\theta} \ll X^{v^{-1} k(k+1)/2+\frac{v-1}{2}-\theta}. \]
Let $k = \lceil \theta(1-1/v)^{-1} \rceil$. As long as $2r_0 \geq k(k+1)$, we see that 
\[ Z_{r_0,k}(Y; \mathbf{h}) \ll Y^{2r_0- k(k+1)/2 + \epsilon}X^{v^{-1} k(k+1)/2+\frac{v-1}{2}-\theta} \ll X^{\frac{2r}{v}-\frac{2M_v}{v}+\frac{v-1}{2}-\theta+\epsilon}. \]
Hence, the claimed estimate holds for $2r \geq \lceil \theta(1-1/v)^{-1} \rceil(\lceil \theta(1-1/v)^{-1} \rceil+1)+2M_v$, as required. 
\end{proof}

The proofs of Theorems \ref{thm:main} and \ref{thm:general} are straightforward from here. 

\begin{proof}[Proof of Theorems \ref{thm:main} and \ref{thm:general}]
We begin with Theorem \ref{thm:general}. Suppose $\theta > 3$ and $3 \leq n \leq \lfloor \theta \rfloor$. Let $2r \geq \lceil \theta(1-1/n)^{-1} \rceil(\lceil \theta(1-1/n)^{-1}\rceil+1)+2M_n$. We apply Lemma \ref{lem:taylor} with $u=v = n$. By this lemma and Lemma \ref{lem:meancasegen}, we have 
\begin{align*} \int_{-\kappa}^{\kappa} |g(\alpha)|^{2r} \d\alpha & \ll \kappa X^{2r(1-\frac{1}{n})+\frac{2M_n}{n}-\frac{n-1}{2}+\epsilon} \int_{-1}^{1} \int_{0}^{1} |H(\alpha,\boldsymbol{\beta}_{n-1};I_0 )|^{2r-n(n-1)+2} \d \boldsymbol{\beta}_{n-1} \d \alpha
\\ 
& \ll \kappa X^{2r(1-\frac{1}{n})+\frac{2M_n}{n}-\frac{n-1}{2}+\epsilon} X^{\frac{2r}{n}-\frac{2M_n}{n}+\frac{n-1}{2}-\theta}
\\ 
& \ll \kappa X^{2r-\theta+\epsilon}, 
\end{align*}
proving Theorem \ref{thm:general}. 

By Lemmas \ref{lem:vinodiopair} and \ref{lem:conj2}, it follows that $(m_k,k)_{k=3}^{n}$ is a sequence of Diophantine pairs if $m_k = k(k-1)/2$ for $k \geq 4$ and $m_k = 2$ for $k = 3$. Let $n = \lfloor c \theta^{1/2} \rfloor+2$ for some positive constant $c$ to be chosen later; here we assume $\theta$ is large enough that $3 \leq \lfloor c \theta^{1/2} \rfloor +2 \leq \lfloor \theta \rfloor$. Note that if $c = 1$, then $3 \leq n \leq \lfloor \theta \rfloor$ for $\theta > 3$. It follows that 
\begin{align*} 2M_n & = \sum_{k=3}^n 2m_k=4+ \sum_{k=4}^n k(k-1) = \frac{1}{3}n(n+1)(n-1)-4
\\ 
& = \frac{1}{3}( \lfloor c \theta^{1/2}\rfloor +1) \rfloor( \lfloor c \theta^{1/2} \rfloor+2)( \lfloor c \theta^{1/2} \rfloor+3)-4
\\ 
& \leq \frac{1}{3}(c^3\theta^{3/2}+6c^2\theta+11c\theta^{1/2}+6)-4 = \frac{c^3}{3}\theta^{3/2}+2c^2\theta +\frac{11c}{3}\theta^{1/2}-2.
\end{align*}
Next, one has 
\begin{align*} 
\lceil\theta(1-1/n)^{-1}\rceil(\lceil\theta(1-1/n)^{-1}\rceil+1) & = \lceil \theta(1+(\lfloor c\theta^{1/2} \rfloor +1)^{-1} \rceil (\lceil \theta(1+(\lfloor c\theta^{1/2} \rfloor +1)^{-1} \rceil+1)
\\ 
& \leq (\theta+c^{-1}\theta^{1/2}+1)(\theta+c^{-1}\theta^{1/2}+2)
\\ 
 & = \theta^{2}+\frac{2}{c}\theta^{3/2}+(3+\frac{1}{c^2})\theta+\frac{3}{c}\theta^{1/2}+2.
\end{align*}
Therefore, if 
\[ 2r \geq \theta^2 +(2/c+c^3/3)\theta^{3/2}+(2c^2+3+1/c^2)\theta+(11c/3+3/c)\theta^{1/2}, \]
then one indeed has 
\begin{align*} 2r & \geq \lceil\theta(1-1/n)^{-1}\rceil(\lceil\theta(1-1/n)^{-1}\rceil+1) + 2M_n, 
\end{align*}
and so the estimate \eqref{eq:meanval} holds by Theorem \ref{thm:general}, as long as $3 \leq \lfloor c \theta^{1/2} \rfloor +2 \leq \lfloor \theta \rfloor$. 
Optimizing for the second order term would lead us to choose $c = 2^{1/4}$. Note that if $\theta > 4$ then indeed $3 \leq \lfloor c \theta^{1/2} \rfloor +2 \leq \lfloor \theta \rfloor$, and so in this case we let $c = 2^{1/4}$. If $3 < \theta < 4$ then we set $c = 1$. Thus, estimate \eqref{eq:meanval} holds for 
\[ 2r \geq \theta^2 +7\theta^{3/2}/3+6\theta+20\theta^{1/2}/3 \]
if $\theta > 3$ and for 
\[ 2r \geq \theta^2+\frac{2^{11/4}}{3} \theta^{3/2}+(3+\frac{5\sqrt{2}}{2})\theta+2^{1/4}(\frac{11}{3}+\frac{3\sqrt{2}}{2})\theta^{1/2} \]
if $\theta > 4$. In both cases, it suffices to have $2r \geq \theta^2+9\theta^{3/2}$. This proves Theorem \ref{thm:main}. 
\end{proof}

\section{Concluding Remarks} \label{sec:conc}
There are further potential improvements to Theorem \ref{thm:main}. In particular, one would expect that the lower order terms can be reduced further. A number of variables are lost in Theorem \ref{thm:general} from the bounds coming from the use of Diophantine pairs. Therefore, one approach to reducing the lower order terms in Theorem \ref{thm:main} could be a more refined analysis of admissible Diophantine pairs. Lemma \ref{lem:conj2} shows that one expects to do better than what is available by an application of the Main Conjecture of Vinogradov's Mean Value Theorem. 

Let us examine \eqref{eq:diopairsystem}. One expects three kinds of solutions to this system. The first are diagonal solutions, the magnitude of which one expects to be $\ll Y^m$. The second are small solutions, which is to say $|x_i| \leq cY^{1-1/k}$ for some small constant $c$. There are $\ll Y^{2m(1-1/k)}$ of these. Finally, there are solutions characterized as a product of local densities, which number as $\ll Y^{2m-\frac{1}{2}(k-1)}$. If this heuristic is to be believed, one would make the following conjecture. 

\begin{conjecture} \label{conj:dioph}
Fix $k \in \mathbb{N}$ with $k \geq 2$. Fix also positive constants $C_0, \ldots, C_{k-1}$. Let $\mathcal{C}_{m,k}(X)$ be the set of integer solutions $(x_1,\ldots,x_{2m})$ to the system 
\[ |x_1^i +\cdots+x_{m}^i-x_{k}^i-\cdots-x_{2m}^i| \leq C_i Y^{i(1-1/k)} \quad \text{ for } \quad 1 \leq i \leq k-1, \]
with $|x_i| \leq C_0Y$. 
Then for any $\epsilon > 0$ one has the estimate 
\[ |\mathcal{C}_{m,k}(X)| \ll_{C_0,\ldots,C_{k-1}, \epsilon} Y^{\epsilon}( Y^{m}+Y^{2m(1-\frac{1}{k})}+ Y^{2m-\frac{1}{2}(k-1)}). \] 
In particular, one has $|\mathcal{C}_{m,k}(X)| \ll_{\epsilon} Y^{2m-\frac{1}{2}(k-1)+\epsilon}$ for $2m \geq k(k-1)/2$. 
\end{conjecture} 

Conjecture \ref{conj:dioph} essentially states that for this system, one only requires half of the number of variables to obtain the bound analogous to the Main Conjecture of Vinogradov's Mean Value Theorem, as indicated by Lemma \ref{lem:vinodiopair}. Perhaps it is possible to do even better. One might also consider the system 
\[ |x_1^i +\cdots+x_{m}^i-x_{k}^i-\cdots-x_{2m}^i| \leq C_i Y^{\delta i}, \quad 1 \leq i \leq k-1, \]
for other $0 \leq \delta \leq 1$. This system may arise in other Diophantine counting problems as well. 

Proving Conjecture \ref{conj:dioph} would reduce the bound in Theorem \ref{thm:main} to $2r \geq \theta^2 +C\theta^{3/2}$ for an even smaller constant $C < 9$. Similar bounds for $\widetilde{G}_0(\theta)$ and $ \widetilde{G}_0(\theta)$ would follow. While this result may be slightly underwhelming, it may be possible to extract further information from solutions to \eqref{eq:diopairsystem}. Inspecting the proof of Lemma \ref{lem:conj2}, we see that either the system displays strong diagonal behavior or a divisor estimate constrains the possible values of the variables. A similar phenomenon might hold in general. 
An analysis of the structure of the solutions to \eqref{eq:diopairsystem} rather than just the number may yield better bounds on the lower order terms in Theorem \ref{thm:main}, maybe even $2r \geq \theta^2 +C\theta$. For example, when $k=2$ and $m=1$, one knows that $x_1$ must be within $\ll X^{1/2}$ of $x_2$ and this stronger fact is used in the proof of Theorem 3.4 in \cite{Poulias1}. This allows for the removal of the $\epsilon$ as well. Using this as inspiration to develop a more efficient version of the argument in Lemma \ref{lem:meancasegen} might allow the recovery of the $2M_n$ variables lost in Theorem \ref{thm:general}.

\bibliographystyle{plain}
\bibliography{refs}

\noindent\textsc{Department of Mathematics, Purdue University, West Lafayette, IN, USA.}
\vspace{.03in}
\newline\noindent\textit{Email address}: bharga37@purdue.edu.

\end{document}